\newtheorem{thm}{Theorem}
\newtheorem{prop}{Proposition}
\newtheorem{lem}{Lemma}
\newtheorem{cor}{Corollary}
\theoremstyle{definition} \newtheorem{de}{Definition}
\theoremstyle{plain} \newtheorem*{claim}{Claim}
\theoremstyle{plain} \newtheorem*{remark}{Remark}
\theoremstyle{definition} \newtheorem{ex}{Example}
\DeclareMathOperator{\core}{core}
\def\bs#1{\boldsymbol#1}
\def\cg#1{\core (#1)}
\def\img{i}
\begin{document}

%%%%%%%%%%%%%%%%%%%%%%%%%%%%%%%%%%%%%%%%%%%%%%%%%%%%%%%%%%%%%%%
%For article style file
\title{New graph polynomials from the Bethe approximation of 
the Ising partition function}
\author{
  \\[-1mm]
  {Yusuke Watanabe}\footnote{The Institute of Statistical Mathematics, 10-3 Midori-cho, Tachikawa, Tokyo, Japan}    \\
  {\tt watay@ism.ac.jp}    \\
  \and
  \\[-1mm]
  {Kenji Fukumizu$*$}\\    
  {\tt fukumizu@ism.ac.jp} 
}
\date{}
\maketitle
\thispagestyle{empty}   % Suppress page number on front page.
%%%%%%%%%%%%%%%%%%%%%%%%%%%%%%%%%%%%%%%%%%%%%%%%%%%%%%%%%%%%%%%%%%%%%%

%%%%%%%%%%%%%%%%%%%%%%%%%%%%%%%%%%%%%%%%%%%%%%%%%%%%%%%%%%%%%%%%%
%%For cpc style file
%\title[Combinatorics, Probability \& Computing]{New graph polynomials from the Bethe approximation of the Ising partition function}
%\author[Y. Watanabe and K. Fukumizu]
%       {\spreadout{Y. WATANABE}{} and \spreadout{K. FUKUMIZU}{}
%        \thanks{Research partially supported by a Grant-in-Aid for JSPS Fellows
%         20-993 and a Grant-in-Aid for Scientific Research (C) 19500249.} \\
%        \affilskip The Institute of Statistical Mathematics, \\
%        \affilskip 10-3 Midori-cho, Tachikawa, Tokyo 190-8562, Japan \\
%        watay@ism.ac.jp and fukumizu@ism.ac.jp
%       }
%\recdate{16 April 2010}
%\pubyear{2010}
%\pagerange{\pageref{firstpage}--\pageref{lastpage}}
%\doi{00000000000000}
%\volume{00}
%\maketitle
%%%%%%%%%%%%%%%%%%%%%%%%%%%%%%%%%%%%%%%%%%%%%%%%%%%%%%%%%%%%%%%%%%%%%%%

\begin{abstract}
We introduce two graph polynomials and discuss their properties. One
is a polynomial of two variables whose investigation is motivated by the performance analysis of
the Bethe approximation of the Ising partition function. The other is a
polynomial of one variable that is obtained by the specialization of the first one. 
It is shown that these polynomials satisfy deletion-contraction relations and
are new examples of the V-function, which was introduced by Tutte
(1947, Proc. Cambridge Philos. Soc. 43, 26-40). For these polynomials,
we discuss the interpretations of special values and then obtain the bound
on the number of sub-coregraphs, i.e., spanning subgraphs with no
vertices of degree one. It is proved that the polynomial of one
variable is equal to the monomer-dimer partition function with weights
parameterized by that variable. The properties of the coefficients and the
possible region of zeros are also discussed for this polynomial.
\end{abstract}

%\vspace{0.5cm}
%\noindent
%{\bf KEY WORDS:}  graph polynomial,
%Bethe approximation,
%Ising partition function,
%deletion-contraction relation,
%V-function,
%monomer-dimer partition function,
%\clearpage

%%%%%%%%%%%%%%%%%%%%%%%%%%%%%
\section{Introduction and terminologies}
\subsection{Introduction}
The aim of this paper is to introduce two new graph polynomials
and study their properties.
The first one is a two-variable polynomial denoted by
$\theta_{G}(\beta,\gamma)$ 
and the second one is a one-variable polynomial 
denoted by $\omega_{G}(\beta)$, which is obtained as a specialization of 
$\theta_{G}$.

Partition functions studied in statistical physics have been a source of 
various graph polynomials. 
For example, the partition functions of
the q-state Potts model and 
the bivariated random-cluster model of Fortuin and Kasteleyn
provide graph polynomials.
They are known to be equivalent to the Tutte polynomial
\cite{Bollobas}.
Another example is
the monomer-dimer partition function with uniform 
monomer and dimer weights,
which is essentially the matching polynomial \cite{HLmonomerdimer}.

%Į
The polynomial $\theta_{G}$
comes from the problem of computing the Ising partition function
defined as
\begin{equation}
Z(G;\bs{J},\bs{h}):= \sum_{x_1,\ldots,x_N=\pm 1} 
\exp
\Big(
{\sum_{e \in E \atop e=ij} J_e x_i x_j + \sum_{i \in V} h_i x_i }
\Big),  \label{defIsing}
\end{equation} 
where 
$J_e$ and $h_i$ denote coupling constants
and local external fields, respectively, and
 $G=(V,E)$ is the underlying graph.
In general, the partition function is computationally intractable and
the Bethe approximation is a popular method for its approximation \cite{Bethe}.
The approximation ratio, which evaluates the performance 
of this method, depends on the structure of the underlying graph.
In particular, if the graph is a tree, 
the ratio is equal to one, i.e.,
the Bethe approximation gives the exact value of
the partition function.
In principle, the approximation becomes more difficult
as the nullity increases.
In \cite{WFloop},
it is shown that the ratio
is described by a multivariate polynomial
$\Theta_{G}(\bs{\beta},\bs{\gamma})$. 
We derive
the graph polynomial $\theta_{G}(\beta,\gamma)$
as its two-variable version.

%ƒÖ
The polynomial $\omega_{G}(\beta)$ is obtained from
$\theta_{G}(\beta,\gamma)$
by specializing $\gamma=2 \sqrt{-1}$ and 
eliminating a factor $(1-\beta)^{|E|-|V|}$.
We show that the polynomial coincides with 
the monomer-dimer partition function 
with weights parametrized by $\beta$.
In particular, for regular graphs,
$\omega-$polynomials are equal to the matching polynomials up to transformations.

%cont-del
We discuss the properties of $\theta_{G}$ and $\omega_{G}$ from the 
viewpoint of graph polynomials.
The most important feature of these graph polynomials is the
deletion-contraction relation:
\begin{align}
&\theta_{G}({\beta},\gamma)=
(1-\beta) \theta_{G\backslash e}({\beta},\gamma) +
\beta  \theta_{G/e}({\beta},\gamma),   \nonumber \\
&\omega_{G}({\beta})=
\omega_{G\backslash e}({\beta}) +
\beta  \omega_{G/e}({\beta}),   \nonumber
\end{align}
holds whenever $e \in E$ is not a loop.
Note that the graph $G\backslash e$ is obtained from $G$ by the deletion of the
edge $e$, and the graph $G/e$ is the result of the contraction of $e$.
Furthermore, these polynomials are multiplicative:
\begin{equation}
 \theta_{G_1 \cup G_2} =  \theta_{G_1}  \theta_{G_2}  \quad \text{ and } \quad 
 \omega_{G_1 \cup G_2} =  \omega_{G_1}  \omega_{G_2}, \nonumber
\end{equation}
where $G_1 \cup G_2$ is the disjoint union of $G_1$ and $G_2$.
%V-function
Graph invariants that satisfy 
the deletion-contraction relation and the multiplicative law
have been studied by Tutte \cite{Tring} as the V-function.
Our graph polynomials $\theta_{G}$ and $\omega_{G}$ are essentially
examples of V-functions.

Graph polynomials that satisfy deletion-contraction relations
arise from a wide range of problems
\cite{Bollobas,EMinter1}. 
Most of them are known to be equivalent to the Tutte polynomial
or to be obtained by its specialization, and thus, they have reduction formulae even for loops.
Our new graph polynomials do not have such reduction formulae for loops and
are essentially different from the Tutte polynomial.

There have been few researches on specific V-functions except for those on the Tutte polynomial.
The Tutte polynomial has attracted interest because of its rich mathematical properties
such as matroid invariance and connections to links \cite{Welsh,Bollobas}.
These properties are not shared by general V-functions.
As described in this paper, our new V-functions also possess special properties,
and thus, their investigation should be fruitful.

%organization
The remainder of this paper is organized as follows.
In Section \ref{sec1.2},
the definitions and notations on graphs are described. 
Sections \ref{secTheta}, \ref{secMotivation}, and \ref{secfurther}
deal with the investigation of the $\theta$-polynomial:
the definition and basic properties of the $\theta$-polynomial are
presented in Section \ref{secTheta},
the motivation for the definition is
presented in Section \ref{secMotivation}, and
the special values of $\theta_{G}$ are discussed in Section \ref{secfurther}.
Section \ref{secomega} deals with the investigation of $\omega_{G}$
including a study on the special value, $\beta=1$.

\subsection{Basic terminologies and definitions}\label{sec1.2}
Let $G=(V,E)$ be a finite graph, where
$V$ is the set of vertices and 
$E$, the set of undirected edges.
In this paper, a graph implies a multigraph, in which
loops and multiple edges are allowed.
A subset $s$ of $E$ is identified with the spanning subgraph
$(V,s)$ of $G$ unless otherwise stated.

The notation $e=ij$ is used to indicate that 
vertices $i$ and $j$ are the endpoints of $e$. 
The number of ends of edges connecting to a vertex $i$ is called 
the {\it degree} of $i$ and denoted by $d_i$.

The number of connected components of $G$ is denoted by $k(G)$. 
The {\it nullity} and the {\it rank} of $G$ are defined by
$n(G):=|E|-|V|+k(G)$ and $r(G):=|V|-k(G)$, respectively.

For an edge $e \in E$,
the graph
$G \backslash e$ is obtained by deleting $e$
and $G/e$ is obtained by contracting $e$. 
If $e$ is a loop, $G/e$ is the same as $G \backslash e$.
The disjoint union of graphs $G_1$ and
$G_2$ is denoted by $G_1 \cup G_2$.
The graph with a single vertex and
$n$ loops is called the {\it bouquet graph} and denoted by $B_n$. 

The {\it core} of a graph $G$ 
is obtained by a process of clipping
vertices of degree one step-by-step \cite{Stopology}. 
This graph is denoted by $\cg{G}$.
For example, the core of a forest $F$ is the graph of $k(F)$ vertices
without edges.
A graph $G$ is called a {\it coregraph} if $G=\cg{G}$. 
In other words, a graph is a coregraph if and only if the degree of each
vertex is not equal to one.  
Note that the core of a graph is also called the {\it 2-core} \cite{PWinside}
and can be generalized to the notion of the {\it k-core} \cite{Bevolution,Rk-core}.

\section{Two-variable graph polynomial $\theta$} \label{secTheta}

\subsection{Definition}
First, we present the definition of a graph polynomial.
For the definition, we define a set of polynomials $\{f_n(x)\}_{n=0}^{\infty}$ 
inductively by the relations
\begin{equation}
 f_0(x)=1, \quad
f_1(x)=0,   \quad
\text{ and } \quad
f_{n+1}(x)=x f_n(x) + f_{n-1}(x). \label{defindf}
\end{equation}
Therefore, for instance, $f_2(x)=1,f_3(x)=x$, and so on.
Note that
these polynomials are transformations of the Chebyshev polynomials of the second
kind:
$f_{n+2}(2 \sqrt{-1} z)=(\sqrt{-1})^{n} U_{n}(z)$,
where $U_{n}(\cos \theta)= \frac{\sin ((n+1)\theta)}{\sin \theta}$.

\begin{de}
\label{deftheta}
For a given graph $G$, 
\begin{equation}
\theta_{G}({\beta},{\gamma}):= 
\sum_{s \subset E}
\beta^{|s|}
\prod_{i \in V} f_{d_i(s)}(\gamma)
\quad
\in \mathbb{Z} [ \beta,\gamma ],  \label{defthetaeq}
\end{equation}
where $d_{i}(s)$ is the degree of the vertex $i$ in $s$.
\end{de}

In Eq.~(\ref{defthetaeq}), there exists a summation over all subsets of $E$.
Recall that an edge set $s$ is identified with the spanning subgraph $(V,s)$.
Since $f_1(x)=0$, 
the subgraph $s$ contributes to the summation only if $s$ does
not have a vertex of degree one. 
Therefore, the summation is regarded as the summation over all
coregraphs of the forms $(V,s)$;
we call these {\it sub-coregraphs}.
In relevant papers, such subgraphs are called generalized loops
\cite{CCloopPRE,CCloop} or closed subgraphs \cite{Nseries,Nsubgraph}.

The following facts are immediate from the definition.
\begin{prop}$\quad$ 
\label{propbasictheta}
\begin{itemize}
\item[\rm{(a)}] \ \
$\theta_{G_1 \cup G_2}({\beta},{\gamma})
 =\theta_{G_1}({\beta},{\gamma})\theta_{G_2}({\beta},{\gamma}). $
\item[\rm{(b)}] \ \
$\theta_{B_n}({\beta},\gamma)=
\sum_{k=0}^{n} {n \choose k} f_{2k}(\gamma) \beta^{k}.$ 
\item[\rm{(c)}] \ \
$ \theta_{G}({\beta},{\gamma})=\theta_{\cg{G}}({\beta},{\gamma}).$
\end{itemize}
\end{prop}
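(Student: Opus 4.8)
The plan is to verify each of the three claims directly from Definition~\ref{deftheta}, since all of them follow by elementary manipulation of the defining sum $\theta_{G}(\beta,\gamma)=\sum_{s\subset E}\beta^{|s|}\prod_{i\in V}f_{d_i(s)}(\gamma)$.

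For part~(a), the key observation is that for a disjoint union $G_1\cup G_2$ the edge set is $E_1\sqcup E_2$, so a subset $s\subset E_1\cup E_2$ decomposes uniquely as $s=s_1\sqcup s_2$ with $s_j\subset E_j$, and moreover $|s|=|s_1|+|s_2|$ while the degree $d_i(s)$ of a vertex $i\in V_j$ depends only on $s_j$. Hence the product over $V=V_1\sqcup V_2$ factors, and the sum over $s$ factors as a product of the two sums over $s_1$ and $s_2$; this is the standard ``partition function of a disjoint union is a product'' argument and presents no obstacle.

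For part~(b), I would specialize the definition to $G=B_n$, the bouquet with one vertex $v$ and $n$ loops. Every subset $s$ of the $n$ loops is automatically a sub-coregraph, and if $|s|=k$ then the single vertex has degree $d_v(s)=2k$ (each loop contributes $2$ to the degree). Grouping the $\binom{n}{k}$ subsets of size $k$ gives $\theta_{B_n}(\beta,\gamma)=\sum_{k=0}^{n}\binom{n}{k}f_{2k}(\gamma)\beta^{k}$ immediately.

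For part~(c), I would use the description of the sum as ranging only over sub-coregraphs, which was noted right after Definition~\ref{deftheta} because $f_1=0$ kills any $s$ having a degree-one vertex. The core $\core(G)$ is obtained by iteratively deleting degree-one vertices together with their incident edges; these clipped edges can never belong to a sub-coregraph $s$ (a pendant edge would force its leaf endpoint to have degree one in $s$, unless that edge is excluded, and then by induction the whole pendant path is excluded). Thus every sub-coregraph of $G$ uses only edges of $\core(G)$, and the clipped vertices always have degree zero, contributing a factor $f_0(\gamma)=1$. Conversely every sub-coregraph of $\core(G)$ is a sub-coregraph of $G$ with the same weight. Hence the two sums agree. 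The one point requiring a little care — and the main thing to get right — is the inductive argument that the edges removed during the coring process genuinely cannot appear in any sub-coregraph; I would phrase this by induction on the number of clipping steps, noting that deleting a single degree-one vertex $i$ (with its unique incident edge $e$) changes neither the set of sub-coregraphs nor their weights, since any $s$ containing $e$ would give $d_i(s)\in\{1\}$ (or, if $e$ is counted once, exactly~$1$) and hence contribute~$0$.
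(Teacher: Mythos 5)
Your proposal is correct, and it takes the same route the paper does: the paper simply states that all three facts are ``immediate from the definition,'' and your argument is exactly the direct verification being alluded to (factorization of the sum over $s=s_1\sqcup s_2$ for (a), degree $2k$ from $k$ loops for (b), and the $f_1=0$ / $f_0=1$ observation that makes clipped pendant edges and vertices irrelevant for (c)). The extra care you take with the induction on clipping steps in (c) is sound and fills in the only detail the paper leaves implicit.
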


\begin{ex}
For a tree $T$, $\theta_{T}(\beta,\gamma)=1$.
For a cycle graph $C_n$ having $n$ vertices and $n$ edges,
$\theta_{C_n}(\beta,\gamma)=1+\beta^{n}$. 
For the complete graph $K_{4}$,
$\theta_{K_4}(\beta,\gamma)=1+4\beta^{3}+3 \beta^{4}+6
 \beta^{5}\gamma^{2}+\beta^{6}\gamma^{4}$.
For the graph $X_1$, as shown in Figure \ref{X1X2},
$\theta_{X_1}(\beta,\gamma)=1+3 \beta^{2}+\beta^{3}\gamma^{2}$.
For the graph $X_2$, as also shown in Figure \ref{X1X2},
$\theta_{X_2}(\beta,\gamma)=1+2\beta+\beta^{2}+\beta^{3}\gamma^{2}$.
\end{ex}

\begin{figure}
\begin{center}
\includegraphics[scale=0.3]{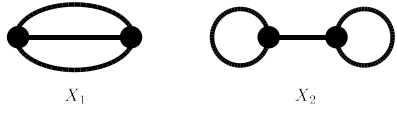} 
\vspace{-1mm}
\caption{Graph $X_1$ and $X_2$}
\label{X1X2}
\end{center}
\end{figure}

\subsection{Deletion-contraction relation and expression as Tutte's V-function}
\subsubsection{Deletion-contraction relation}
We prove the most important property of the
graph polynomial, $\theta$, called the deletion-contraction relation. 
The following formula of $f_n(x)$ plays an important role in the proof of this relation.
\begin{lem}\label{lemf}
 ${}^\forall n,m \in \mathbb{N}$,
\begin{equation}
f_{n+m-2}(x)= f_n(x)f_m(x)+f_{n-1}(x)f_{m-1}(x). \nonumber
\end{equation}
\end{lem}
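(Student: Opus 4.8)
The plan is to prove the identity $f_{n+m-2}(x) = f_n(x)f_m(x) + f_{n-1}(x)f_{m-1}(x)$ by induction. First I would fix one of the parameters, say $m$, and induct on $n$ (or, symmetrically, treat $n$ as fixed and induct on $m$; by the symmetry of the right-hand side under swapping $(n,m)$ it does not matter which). The base cases are $n=1$ and $n=2$, where we can read the claim off directly from the initial data $f_0 = 1$, $f_1 = 0$, $f_2 = 1$: for $n=1$ the right-hand side is $f_1 f_m + f_0 f_{m-1} = f_{m-1}$, and the left-hand side is $f_{m-1}$; for $n=2$ it is $f_2 f_m + f_1 f_{m-1} = f_m = f_m$. (One should be slightly careful about the meaning of $f_{-1}$; taking $m\geq 1$ and checking $n=1,2$ first, or extending the recurrence to define $f_{-1}(x) = x$ consistently with $f_1 = x f_0 + f_{-1}$, i.e. $f_{-1}=0$... actually $f_1 = 0 = x\cdot 1 + f_{-1}$ forces $f_{-1} = -x$; this boundary bookkeeping is a small thing to get right but causes no real trouble since the induction below only ever references indices $\geq 0$ once the base cases are established.)

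The inductive step is the heart of the argument, and it is entirely mechanical: assuming the identity holds for $n-1$ and $n$ (with $m$ arbitrary), apply the defining recurrence $f_{n+1} = x f_n + f_{n-1}$ to the index $n+1$ appearing in $f_{(n+1)+m-2} = f_{(n+m-1)}= x f_{n+m-2} + f_{n+m-3}$. Then $f_{n+m-2} = f_n f_m + f_{n-1} f_{m-1}$ by the case $n$, and $f_{n+m-3} = f_{(n-1)+m-2} = f_{n-1} f_m + f_{n-2} f_{m-1}$ by the case $n-1$. Substituting and regrouping,
\begin{align*}
f_{(n+1)+m-2} &= x\big(f_n f_m + f_{n-1} f_{m-1}\big) + \big(f_{n-1} f_m + f_{n-2} f_{m-1}\big)\\
&= \big(x f_n + f_{n-1}\big) f_m + \big(x f_{n-1} + f_{n-2}\big) f_{m-1}\\
&= f_{n+1} f_m + f_n f_{m-1},
\end{align*}
which is exactly the claim for $n+1$. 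So the induction closes.

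There is no genuine obstacle here: the only thing to watch is the indexing at the low end (making sure the recurrence is applied only where it is valid and that the base cases cover enough values of $n$ so that the two-step induction has its footing), and deciding whether to phrase things so that $f_{-1}$ never appears or to adopt a convention for it. I would also remark that this identity is the specialization to these normalized Chebyshev-type polynomials of the standard product-to-sum / Catalan-style recurrences for $U_n$ — via the stated relation $f_{n+2}(2\sqrt{-1}z) = (\sqrt{-1})^n U_n(z)$ it is equivalent to an identity $U_{n+m}(z) = U_n(z)U_m(z) - U_{n-1}(z)U_{m-1}(z)$ for Chebyshev polynomials of the second kind — so one could alternatively cite that, but the direct induction is cleaner and self-contained and is what I would present.
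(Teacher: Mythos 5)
Your proof is correct and is exactly the argument the paper intends: the paper's own proof is the one-line remark ``easily proved by induction using the recurrence,'' and your two-step induction on $n$ with base cases $n=1,2$ (which keeps all indices $\geq 0$ and makes the $f_{-1}$ digression moot) is the standard way to fill that in.
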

\begin{proof}
Easily proved by induction using Eq.(\ref{defindf}).
\end{proof}

\begin{thm}
[Deletion-contraction relation]
\label{thmcd}
For a non-loop edge $e \in E$,
\begin{equation}
 \theta_{G}(\beta,\gamma)=
(1-\beta) \theta_{G\backslash e}(\beta,\gamma) +
\beta  \theta_{G/e}({\beta},\gamma).  \nonumber
\end{equation}
\end{thm}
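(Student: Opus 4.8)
The plan is to split the defining sum for $\theta_G$ over subsets $s\subseteq E$ according to whether the chosen non-loop edge $e=ij$ (with $i\neq j$) lies in $s$. First I would write
\[
\theta_G(\beta,\gamma) = \sum_{s\subseteq E,\, e\notin s}\beta^{|s|}\prod_{v\in V}f_{d_v(s)}(\gamma) \;+\; \sum_{s\subseteq E,\, e\in s}\beta^{|s|}\prod_{v\in V}f_{d_v(s)}(\gamma).
\]
The first sum ranges over all subsets of $E\setminus\{e\}$ and hence equals $\theta_{G\backslash e}(\beta,\gamma)$ verbatim, since $G\backslash e$ has the same vertex set $V$. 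For the second sum I would substitute $s=s'\cup\{e\}$ with $s'\subseteq E\setminus\{e\}$; then $|s|=|s'|+1$, the degrees satisfy $d_v(s)=d_v(s')$ for $v\neq i,j$, while $d_i(s)=d_i(s')+1$ and $d_j(s)=d_j(s')+1$ precisely because $e$ is not a loop. This rewrites the second sum as $\beta\sum_{s'\subseteq E\setminus\{e\}}\beta^{|s'|}f_{d_i(s')+1}(\gamma)f_{d_j(s')+1}(\gamma)\prod_{v\neq i,j}f_{d_v(s')}(\gamma)$.

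Next I would bring in the contraction. Identifying the edge sets of $G/e$ and $G\backslash e$ with $E\setminus\{e\}$, a subset $s'$ regarded as a spanning subgraph of $G/e$ has, at the merged vertex, degree $d_i(s')+d_j(s')$ (this stays correct even when $G$ has edges parallel to $e$, which become loops in $G/e$ and contribute $2$ to both expressions), and unchanged degrees elsewhere. Hence
\[
\theta_{G/e}(\beta,\gamma)=\sum_{s'\subseteq E\setminus\{e\}}\beta^{|s'|}f_{d_i(s')+d_j(s')}(\gamma)\prod_{v\neq i,j}f_{d_v(s')}(\gamma).
\]
Applying Lemma~\ref{lemf} with $n=d_i(s')+1$ and $m=d_j(s')+1$ gives $f_{d_i(s')+d_j(s')}(\gamma)=f_{d_i(s')+1}(\gamma)f_{d_j(s')+1}(\gamma)+f_{d_i(s')}(\gamma)f_{d_j(s')}(\gamma)$, so $\theta_{G/e}$ splits into two sums: the first is exactly $\beta^{-1}$ times the ``$e\in s$'' part of $\theta_G$ computed above, and the second is exactly $\theta_{G\backslash e}$. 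That is, $\beta\,\theta_{G/e}=(\theta_G-\theta_{G\backslash e})+\beta\,\theta_{G\backslash e}$, which rearranges to the claimed identity.

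I expect the only point requiring genuine care to be the degree bookkeeping for the contraction --- confirming that the merged-vertex degree is $d_i(s')+d_j(s')$ in all cases, including multiple edges between $i$ and $j$ and the loops they produce in $G/e$ --- together with the observation that the vanishing $f_1=0$ plays no special role in this argument; everything else follows formally once Lemma~\ref{lemf} is available. There are no estimates involved: it is a term-by-term comparison of three polynomial sums.
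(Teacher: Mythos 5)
Your proof is correct and follows essentially the same route as the paper's: split the sum in Definition~\ref{deftheta} according to whether $e\in s$, identify the $e\notin s$ part with $\theta_{G\backslash e}$, and use Lemma~\ref{lemf} with $n=d_i(s')+1$, $m=d_j(s')+1$ to relate the $e\in s$ part to $\beta\theta_{G/e}-\beta\theta_{G\backslash e}$. The only difference is that you spell out the degree bookkeeping at the contracted vertex (including parallel edges), which the paper leaves implicit.
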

\begin{proof}
Classify subgraph $s$ in the sum of Eq.~(\ref{defthetaeq}) depending on whether $s$ includes $e$ or not.
The former subgraph $s  \ni e=ij$ yields 
$-\beta \theta_{G \backslash e}+\beta\theta_{G / e}$,
where Lemma \ref{lemf} is used with $n=d_i$ and $m=d_j$.
The latter subgraph $s \not \ni e$ yields $\theta_{G \backslash e}$.
\end{proof}

\subsubsection{Relation to Tutte's V-function}
In 1947 \cite{Tring}, Tutte defined
a class of graph invariants called the V-function.
The definition is as follows.
\begin{de}
Let $\mathcal{G}$ be the set of isomorphism classes of finite undirected
 graphs, with loops and multiple edges allowed.
Let $R$ be a commutative ring.
A map $\mathcal{V} : \mathcal{G} \rightarrow R$
is called a {\it V-function}
if it satisfies the following two conditions:
\begin{align*}
&{\rm(i)} \quad 
\mathcal{V}(G)=\mathcal{V}(G \backslash e)+\mathcal{V}(G/e)
\qquad \text{if $e \in E$ is not a loop,} \qquad \qquad \\
&{\rm(ii)} \quad
\mathcal{V}(G_1 \cup G_2)= \mathcal{V}(G_1) \mathcal{V}(G_2). \qquad \qquad
\end{align*}
\end{de}

Our graph invariant $\theta$ is essentially an example of 
a V-function.
In the definition of V-functions, the coefficients of the
deletion-contraction relation are $1$, whereas those of $\theta$
are $(1-\beta)$ and $\beta$.
However, if we modify $\theta$ to
\begin{equation}
\hat{\theta}_{G}(\beta,\gamma):= (1-\beta)^{-|E|+|V|}\beta^{-|V|}
 \theta_{G}(\beta,\gamma) ,  \nonumber
\end{equation}
we obtain a V-function
$\hat{\theta} : \mathcal{G} \rightarrow  \mathbb{Z}[\beta,\gamma,\beta^{-1},(1-\beta)^{-1}]$.

In Theorem 10 of \cite{BPRcontraction}, Bollob\' as et al. 
have constructed non-isomorphic $k$-connected graphs that are not distinguished by deletion-contraction invariants. 
The result implies that these graphs have the same $\theta$-polynomial.

\subsubsection{Alternative expression of $\theta$-polynomial}
By successive applications of the conditions of V-function,
we can reduce the value at any graph to the values at bouquet graphs.
Therefore, we can say that a V-function is completely determined by its
boundary condition, i.e., the values at the bouquet graphs.
Conversely,
Tutte showed in \cite{Tring} that for an arbitrary boundary condition,
there exists a V-function that satisfies it.
More explicitly, the V-function satisfying a boundary condition
$\{ \mathcal{V}(B_n) \}_{n=0}$
is given by
\begin{equation}
 \mathcal{V}(G)= \sum_{s \subset E}
\prod_{n=0} z_n^{i_n(s)}, \label{V-funcTutterep}
\end{equation}
where $z_n:=\sum_{j=0}^{n}{n \choose j}(-1)^{n+j}\mathcal{V}(B_j)$ and
$i_n(s)$ is the number of connected components of the subgraph
$s$ with nullity $n$.

Note that another expansion called the spanning forest expansion of $\mathcal{V}(G)$
is described in Section 5 of \cite{BPRcontraction}.

In the case of $\theta$, Eq.~(\ref{V-funcTutterep})
derives the following expression.
Although this theorem is a trivial consequence of Theorem \ref{cortheta}
proved more directly later,
we present a proof of Theorem \ref{thmaltrep} to clarify the relation to Eq.~(\ref{V-funcTutterep}).
\begin{thm}
\label{thmaltrep}
\begin{equation}
\theta_{G}(\beta,\gamma)=
 \sum_{s \subset E}
\prod_{n=0}
\theta_{B_n}(1,\gamma)^{i_n(s)}
\beta^{|s|}
(1-\beta)^{|E|-|s|}. \label{thmaltrepeq}
\end{equation}
\end{thm}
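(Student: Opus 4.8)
The plan is to derive Eq.~(\ref{thmaltrepeq}) directly from Tutte's general formula~(\ref{V-funcTutterep}) applied to the normalized invariant $\hat\theta$. First I would recall that $\hat\theta_G(\beta,\gamma) = (1-\beta)^{-|E|+|V|}\beta^{-|V|}\theta_G(\beta,\gamma)$ is a genuine V-function (as established just above), so by Tutte's theorem it admits the expansion
\begin{equation}
\hat\theta_G(\beta,\gamma)= \sum_{s\subset E}\prod_{n=0} z_n^{i_n(s)}, \qquad z_n:=\sum_{j=0}^{n}\binom{n}{j}(-1)^{n+j}\hat\theta_{B_n}(\beta,\gamma). \nonumber
\end{equation}
The task then reduces to identifying the coefficients $z_n$ and unwinding the normalization factor so that the formula for $\theta_G$ itself takes the claimed shape.

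Next I would compute $z_n$ explicitly. Since $B_n$ has one vertex and $n$ edges, $\hat\theta_{B_n}(\beta,\gamma)=(1-\beta)^{-n+1}\beta^{-1}\theta_{B_n}(\beta,\gamma)$, and Proposition~\ref{propbasictheta}(b) gives $\theta_{B_n}(\beta,\gamma)=\sum_{k=0}^{n}\binom{n}{k}f_{2k}(\gamma)\beta^{k}$. The binomial alternating sum defining $z_n$ should collapse this double sum: I expect the combinatorial identity $\sum_{j=0}^{n}\binom{n}{j}(-1)^{n+j}(1-\beta)^{-j+1}\beta^{-1}\sum_{k=0}^{j}\binom{j}{k}f_{2k}(\gamma)\beta^{k}$ to simplify, after interchanging the order of summation and using $\sum_{j\ge k}\binom{n}{j}\binom{j}{k}(-1)^{n+j}t^{j}$-type identities, to a clean expression. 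The natural guess, matching the target, is that $z_n = \theta_{B_n}(1,\gamma)(1-\beta)^{?}$ up to the right power; more precisely I expect $z_n$ to coincide with $\theta_{B_n}(1,\gamma)$ times a power of $(1-\beta)$ that, when reassembled over all components of $s$ together with the global prefactor $(1-\beta)^{|E|-|V|}\beta^{|V|}$ from inverting the normalization, produces exactly $\beta^{|s|}(1-\beta)^{|E|-|s|}$. Here one uses that $\sum_n n\cdot i_n(s) = n(s) = |s|-|V|+k(s)$ and $\sum_n i_n(s)=k(s)$, so the component-wise powers of $\beta$ and $(1-\beta)$ telescope correctly across the graph.

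Concretely, the bookkeeping step is: multiply~(\ref{V-funcTutterep}) for $\hat\theta$ by $(1-\beta)^{|E|-|V|}\beta^{|V|}$, distribute this prefactor among the connected components of each $s$ using $|V| = \sum$ (vertices per component) and $|E|-|V| = \sum_n (n-1) i_n(s) + (\text{contribution of }|E\setminus s|)$, and check that each component of nullity $n$ receives precisely $\theta_{B_n}(1,\gamma)$ while the edges in $s$ contribute $\beta^{|s|}$ and the edges outside $s$ contribute $(1-\beta)^{|E|-|s|}$. The main obstacle I anticipate is the combinatorial simplification of $z_n$: one must show that the alternating binomial transform of $\{\hat\theta_{B_j}\}$ strips away all the $\beta$-dependence down to the $\beta=1$ value $\theta_{B_n}(1,\gamma)$ (with the correct residual power of $1-\beta$). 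This is a finite identity in $\mathbb{Z}[\beta,\gamma,\beta^{-1},(1-\beta)^{-1}]$ and should yield to either a direct double-sum manipulation or a generating-function argument, but getting the exponents of $\beta$ and $1-\beta$ to match on the nose is the delicate part. Once $z_n$ is pinned down, the remainder is the routine telescoping described above, and comparing with the alternative proof via Theorem~\ref{cortheta} provides an independent check.
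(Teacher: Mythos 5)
Your plan is correct and is essentially the paper's own proof: apply Tutte's expansion Eq.~(\ref{V-funcTutterep}) to the V-function $\hat\theta$, evaluate $z_n$ via an alternating binomial identity applied to $\hat\theta_{B_j}=(1-\beta)^{1-j}\beta^{-1}\sum_k\binom{j}{k}f_{2k}(\gamma)\beta^k$, and undo the normalization using $\sum_n n\, i_n(s)=|s|-|V|+k(s)$ and $\sum_n i_n(s)=k(s)$. The one step you leave as a guess closes exactly as you anticipate: comparing coefficients of $x^k$ in $(1-\tfrac{1-x}{1-\beta})^n=(1-\beta)^{-n}(-\beta+x)^n$ gives $\sum_{j=k}^{n}(-1)^{j+n}\binom{n}{j}\binom{j}{k}(1-\beta)^{-j}=\binom{n}{k}\beta^{n-k}(1-\beta)^{-n}$, whence $z_n=\theta_{B_n}(1,\gamma)\,\beta^{n-1}(1-\beta)^{1-n}$, and the telescoping then yields $\beta^{|s|}(1-\beta)^{|E|-|s|}$ as claimed.
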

\begin{proof}
It is sufficient to verify that
\begin{equation}
\hat{\theta}_{G}(\beta,\gamma)=
 \sum_{s \subset E}
\prod_{n=0}
{\theta}_{B_n}(1,\gamma)^{i_n(s)}
\beta^{|s|-|V|}
(1-\beta)^{|V|-|s|}. \label{thmaltrepeq2}
\end{equation}
By comparing the coefficients of $x^{k}$ in
$(1- \frac{1-x}{1-\beta})^{n}=(1-\beta)^{-n}(-\beta+x)^{n}$,
we have
\begin{equation}
 \sum_{j=k}^{n}(-1)^{j+n}
{n \choose j}{j \choose k}(1-\beta)^{-j}
=
{n \choose k} \beta^{n-k}(1-\beta)^{-n} \label{thmaltrepeq3}
\end{equation}
for every $0 \leq k \leq n$.
Using this equality and Proposition 1.(b), we see that
\begin{equation}
 z_n=
\sum_{j=0}^{n}{n \choose j}(-1)^{n+j} \hat{\theta}_{B_j}(\beta,\gamma)
=
\theta_{B_n}(1,\gamma)\beta^{n-1} (1-\beta)^{1-n}. \nonumber
\end{equation}
Therefore, Eq.~(\ref{V-funcTutterep}) reduces to Eq.~(\ref{thmaltrepeq2}).
\end{proof}

Formulae (\ref{defthetaeq}) and (\ref{thmaltrepeq}) are both 
represented in the sum of the subsets of edges;
however, the terms of a subset are different.
Generally, a V-function does not have a representation
corresponding to Eq.~(\ref{defthetaeq});
this representation
is utilized in the remainderr of the paper
and makes the $\theta$-polynomial worthy of investigation among all V-functions.

\subsubsection{Comparison with Tutte polynomial}
The most famous example of a V-function is the Tutte polynomial
(multiplied with a trivial factor).
The {\it Tutte polynomial} is defined by
\begin{equation}
 T_{G}(x,y):=\sum_{s \subset E}
(x-1)^{r(G)-r(s)}
(y-1)^{n(s)}.\label{defTutte}
\end{equation}
It satisfies a deletion-contraction relation
\begin{equation*}
T_{G}(x,y)=
\begin{cases}
 x T_{G \backslash e}(x,y)  \qquad \qquad \qquad \qquad \text{ if $e$ is a bridge,} \\
 y T_{G \backslash e}(x,y)  \qquad \qquad \qquad \qquad \text{ if $e$ is a loop,}   \\
 T_{G \backslash e}(x,y)+ T_{G / e}(x,y) \qquad \quad \text{ otherwise. }
\end{cases} 
\end{equation*}
It is easy to see that
$\hat{T}_{G}(x,y):=(x-1)^{k(G)}T_{G}(x,y)$ is a V-function
to $\mathbb{Z}[x,y]$.
For bouquet graphs, $\hat{T}_{B_n}(x,y)=(x-1)y^{n}$.
In the case of the Tutte polynomial, 
Eq.~(\ref{V-funcTutterep}) derives 
Eq.~(\ref{defTutte}).

Moreover, the Tutte polynomial $T$ is known to be 
matroidal, i.e., if $G_1$ and $G_2$ give the same cycle matroid,
then ${T}_{G_1}={T}_{G_2}$ holds \cite{Welsh}.
Because $B_{n+m}$ and $B_n \cup B_m$ give the same cycle matroid,
the relation 
\begin{equation}
 {T}_{B_{n+m}}= {T}_{B_n}{T}_{B_m} \label{eq:bouquetmulti}
\end{equation}
is a consequence of the invariance.
Strictly speaking, $\hat{T}$ is not matroidal; however, it satisfies Eq.~(\ref{eq:bouquetmulti}) 
up to the easy factor and is almost matroidal.

The V-functions 
$\hat{\theta}$ and $\hat{T}$ are 
essentially different.
One intuitive understanding is that 
$\hat{\theta}_{B_n}$, shown in Proposition \ref{propbasictheta}.(b),
do not satisfy Eq.~(\ref{eq:bouquetmulti}) 
even if multiplied with an appropriate factor.
(If we set $\gamma=0$, this is not the case. See Proposition \ref{thmgamma0}.)
In the following remark, we formally state the 
difference irrespective of transforms between $(\beta,\gamma)$ and $(x,y)$.

\begin{remark}
\label{propthetaTutte}
For any field $K$, inclusions
$\phi_1: \mathbb{Z}[\beta,\gamma,\beta^{-1},(1-\beta)^{-1}] \hookrightarrow K$,
and $\phi_2: \mathbb{Z}[x,y] \hookrightarrow K$,
we have
\begin{equation}
\phi_1 \circ \hat{\theta} \neq \phi_2 \circ \hat{T}. \nonumber
\end{equation}
\end{remark}
\begin{proof}
It is easy to see that
$\phi_2(\hat{T}_{B_n}) /\phi_2(\hat{T}_{B_0})= \phi_2(y)^{n}$ and
$\phi_1(\hat{\theta}_{B_n}) / \phi_1(\hat{\theta}_{B_0}) 
= \phi_1(1-\beta )^{-n} \phi_1(\sum_{k=0}^{n} {n \choose k} f_{2k}(\gamma) \beta^{k}).$
If $\phi_1 \circ \hat{\theta} = \phi_2 \circ \hat{T}$,
then
$a_n:=$
$\sum_{k=0}^{n} {n \choose k} f_{2k}(\gamma') \beta'^{k}$
$= z^{n}$
for some $z \in K$, where $\gamma'=\phi_1(\gamma)$ and $
 \beta'=\phi_1(\beta)$.
The equation $a_1^{2} = a_2$ gives $\gamma'^{2}\beta'^{2}=0$.
This is a contradiction because $\beta \neq 0$ and $\gamma \neq 0$.
\end{proof}

\section{Motivation for definition} \label{secMotivation}

In this section, we explain the motivation for considering the graph
polynomial $\theta_{G}$, that is,
the relation to the Ising partition function and its Bethe approximation.

\subsection{Definition of weighted graph version of $\theta$-polynomial}
We consider the multi-variable version of $\theta_{G}$
by attaching weights to the vertices and edges 
of $G$ by 
$\bs{\gamma}=(\gamma_{i})_{i \in V}$ and
$\bs{\beta}=(\beta_{e})_{e \in E}$ respectively.
Such a graph is called a {\it weighted graph}.
We assume that the weights are real numbers.

\begin{de}
\label{defTheta}
Let $\bs{\beta}=(\beta_{e})_{e \in E}$ 
 and $\bs{\gamma}=(\gamma_{i})_{i \in V}$
be the weights of $G$. 
\begin{equation}
\Theta_{G}(\bs{\beta},\bs{\gamma}):= 
\sum_{s \subset E}
\prod_{e \in s} 
\beta_{e}
\prod_{i \in V} f_{d_i(s)}(\gamma_i).   \nonumber
\end{equation}
\end{de}
If all vertex and edge weights are set to be the same,
$\Theta_{G}(\bs{\beta},\bs{\gamma})$ reduces to $\theta_{G}(\beta,\gamma)$.
It is trivial by definition that
\begin{align}
&\Theta_{G_1 \cup G_2}(\bs{\beta},\bs{\gamma})
 =\Theta_{G_1}(\bs{\beta},\bs{\gamma})\Theta_{G_2}(\bs{\beta},\bs{\gamma}), \label{Thetapropertymulti}\\
&\Theta_{B_0}(\bs{\beta},\bs{\gamma})=1, \label{Thetaproperty1} \\ 
&\Theta_{G}(\bs{\beta},\bs{\gamma})=\Theta_{\cg{G}}(\bs{\beta},\bs{\gamma}). \label{Thetaproperty2}
\end{align}

In this definition,
$\Theta_G$ is represented in the form of the edge states sum;
however, it is also possible to represent it in the following 
form of a vertex state sum.
This formula is important to show the relation to the Bethe
approximation of the Ising partition function 
because the partition function is also given in the form of a vertex state sum.
\begin{lem}
\label{thetaidentity}
\begin{equation}
\Theta_{G}(\bs{\beta},(\xi_{i}-\xi_{i}^{-1})_{i \in V})= 
\sum_{x_1,\ldots,x_N=\pm 1}
\prod_{e \in E \atop e=ij}
(1+x_{i}x_{j}\beta_{e}\xi_{i}^{-x_i}\xi_{j}^{-x_j})
\prod_{i \in V}
\frac{\xi_{i}^{x_i}}{\xi_{i}+\xi_{i}^{-1}}. \label{thetaidentityeq}
\end{equation}
\end{lem}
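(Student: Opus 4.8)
The plan is to expand the right-hand side of (\ref{thetaidentityeq}) by multiplying out the product over edges. For each edge $e=ij$ we choose, in $1+x_ix_j\beta_e\xi_i^{-x_i}\xi_j^{-x_j}$, either the summand $1$ or the summand $x_ix_j\beta_e\xi_i^{-x_i}\xi_j^{-x_j}$; recording by $s\subseteq E$ the set of edges for which the second summand is chosen, the product becomes $\sum_{s\subseteq E}\prod_{e=ij\in s}x_ix_j\beta_e\xi_i^{-x_i}\xi_j^{-x_j}$. Substituting this in and interchanging the two finite summations, the right-hand side of (\ref{thetaidentityeq}) equals
\[
\sum_{s\subseteq E}\Big(\prod_{e\in s}\beta_e\Big)\sum_{x_1,\dots,x_N=\pm1}\prod_{e=ij\in s}x_ix_j\xi_i^{-x_i}\xi_j^{-x_j}\prod_{i\in V}\frac{\xi_i^{x_i}}{\xi_i+\xi_i^{-1}}.
\]

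Next I would factor the inner spin sum over the vertices. Each edge $e=ij\in s$ contributes one factor $x_i\xi_i^{-x_i}$ ``at $i$'' and one factor $x_j\xi_j^{-x_j}$ ``at $j$'' (a loop at $i$ contributing $x_i^2\xi_i^{-2x_i}$, consistently counted as two ends), so vertex $i$ collects exactly $d_i(s)$ such factors; together with the vertex factor $\xi_i^{x_i}/(\xi_i+\xi_i^{-1})$ the spin sum factors as $\prod_{i\in V}\sum_{x_i=\pm1}x_i^{d_i(s)}\xi_i^{x_i(1-d_i(s))}/(\xi_i+\xi_i^{-1})$. Everything then reduces to the single-vertex identity
\[
\sum_{x=\pm1}\frac{x^d\,\xi^{x(1-d)}}{\xi+\xi^{-1}}=\frac{\xi^{1-d}+(-1)^d\xi^{d-1}}{\xi+\xi^{-1}}=f_d(\xi^{-1}-\xi)\qquad\text{for all }d\ge0.
\]
To prove this, put $g_d(\xi):=\xi^{1-d}+(-1)^d\xi^{d-1}$; a one-line computation gives $g_0=\xi+\xi^{-1}$, $g_1=0$, and $(\xi^{-1}-\xi)g_d+g_{d-1}=g_{d+1}$, so $g_d(\xi)/(\xi+\xi^{-1})$ satisfies the recursion (\ref{defindf}) with the variable specialized to $\xi^{-1}-\xi$, and the identity follows by induction. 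I expect this verification --- both the algebraic identity for $g_d$ and its recursion --- to be the only genuine computation in the proof; the rest is bookkeeping.

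Finally I would reassemble. For a fixed $s$ the inner spin sum equals $\prod_{i\in V}f_{d_i(s)}(\xi_i^{-1}-\xi_i)$. Since $f_n$ is an even (resp.\ odd) polynomial when $n$ is even (resp.\ odd) --- immediate from (\ref{defindf}) by induction --- this equals $(-1)^{\sum_{i\in V}d_i(s)}\prod_{i\in V}f_{d_i(s)}(\xi_i-\xi_i^{-1})$, and $\sum_{i\in V}d_i(s)=2|s|$ is even, so the sign is $+1$. Hence the right-hand side of (\ref{thetaidentityeq}) equals $\sum_{s\subseteq E}\prod_{e\in s}\beta_e\prod_{i\in V}f_{d_i(s)}(\xi_i-\xi_i^{-1})$, which is exactly $\Theta_G(\bs{\beta},(\xi_i-\xi_i^{-1})_{i\in V})$ by Definition~\ref{defTheta}. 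This proves the lemma.
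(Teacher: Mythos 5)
Your proof is correct and follows essentially the same route as the paper's: expand the edge product into a sum over subsets $s\subseteq E$, factor the spin sum over vertices, and identify each single-vertex sum via the closed form of $f_d$ at $\xi-\xi^{-1}$ proved by induction from (\ref{defindf}). The only cosmetic difference is that the paper absorbs the sign into each vertex factor as $(-x_i)^{d_i(s)}$ (using $\sum_i d_i(s)=2|s|$ implicitly), whereas you keep $x_i^{d_i(s)}$, recognize $f_d(\xi^{-1}-\xi)$, and restore the sign at the end via the parity of $f_n$ and the handshake lemma.
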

\begin{proof}
From Eq.~(\ref{defindf}), we can easily verify by induction that
\begin{equation}
f_n(\xi-\xi^{-1})=
\frac{\xi^{n-1}-(-\xi)^{-n+1}}{\xi_{}+\xi_{}^{-1}}. \nonumber
\end{equation}
If we expand the
product with respect to $E$ in the right-hand side of Eq.~(\ref{thetaidentityeq}), 
it is equal to
\begin{equation}
\sum_{s \subset E}
\prod_{e \in s}  
\beta_{e}
\prod_{i \in V} 
\sum_{x_i= \pm 1}
\frac{(-x_i)^{d_{i(s)}} \xi_{i}^{(1-d_{i(s)})x_i}}{\xi_{i}+\xi_{i}^{-1}}. \nonumber
\end{equation}
Then,
the assertion follows immediately.
\end{proof}

\subsection{Relation to Bethe approximation}\label{sec:linkBethe}
We demonstrate that the value $\Theta_{G}$
describes the discrepancy between the true partition function
of the Ising model
and its Bethe approximation.
A more detailed discussion of the same is found in \cite{WFloop}. 

The Bethe approximation is a method used for approximating partition
functions of various statistical mechanical models \cite{Bethe}.
Here, we state it for the case of the Ising partition function.
Recall that 
the {\it Ising partition function} on $G$
for given $\bs{J}=(J_e)_{e \in E}$ and $\bs{h}=(h_i)_{i \in V}$
is defined by Eq.~(\ref{defIsing}).
We write $\psi_{ij}(x_i,x_j)=\exp(J_{ij} x_i x_j)$ and
$\psi_{i}(x_i)=\exp(h_i x_i)$.

\begin{de}
\label{deBethe}
A set of functions $\{b_{e}(x_i,x_j)\}_{e \in E}$ and
 $\{b_{i}(x_i)\}_{i \in V}$ is called a 
{\it belief} \cite{YFWGBP}
if it satisfies
\begin{align}
&\sum_{x_i}b_{e}(x_i,x_j) = b_{i}(x_i)  
\quad \text{ for all } i \in V, \ x_i \in \{\pm 1\}, \text{ and } e=ij \in E, \label{deBetheeq1} \\
&\sum_{x_i,x_j} b_{e}(x_i,x_j) =1 
\quad \quad \text{ for all } e=ij \in E, \label{deBetheeq2} \\
&\prod_{e \in E}
\frac{b_{e}(x_i,x_j)}{b_{i}(x_i)b_{j}(x_j)}
\prod_{i \in V}b_{i}(x_i)^{}
\propto
\prod_{e \in E}\psi_{e}(x_i,x_j)
\prod_{i \in V}\psi_{i}(x_i). \label{deBetheeq3}
\end{align}
Then, the {\it Bethe approximation of the partition function}
$Z_B$ is defined by the proportionality constant of 
Eq.~(\ref{deBetheeq3}):
$Z_B\prod_{e \in E}
\frac{b_{e}}{b_i b_j}
\prod_{i \in V}b_{i}^{}
=
\prod_{e \in E}\psi_{e}
\prod_{i \in V}\psi_{i}$.
\end{de}
For given $\bs{J}$ and $\bs{h}$,
we can obtain a belief
by an algorithm called {\it belief propagation} \cite{Pearl,YFWGBP}.
In practical situations, the algorithm stops in a reasonable time.
Therefore, the Bethe approximation of the partition function is 
used in many applications \cite{Turbo}.

We show that 
$\Theta_{G}(\bs{\beta},\bs{\gamma})$
is equal to $Z/Z_B$. 
We choose variables $\beta_e$ and $\xi_i$
to parameterize $\{b_{e}(x_i,x_j)\}_{e \in E}$ and
 $\{b_{i}(x_i)\}_{i \in V}$,
which satisfy Eqs.~(\ref{deBetheeq1}) and (\ref{deBetheeq2}):
\begin{align*}
&b_{e}(x_i,x_j)=\frac{1}{(\xi_i+\xi_i^{-1})(\xi_j+\xi_j^{-1})}
(\xi_i^{x_i}\xi_j^{x_j}+\beta_e x_i x_j),\\
&b_{i}(x_i)=\frac{\xi_i^{x_i}}{\xi_i+\xi_i^{-1}}. 
\end{align*}
From the definition of $Z_B$ and Lemma \ref{thetaidentity}, we see that 
\begin{align*}
\frac{Z}{Z_B}
&=
\sum_{\bs{x}}
\prod_{e \in E}
\frac{b_{e}(x_i,x_j)}{b_{i}(x_i)b_{j}(x_j)}
\prod_{i \in V}b_{i}(x_i)^{}   \\
&=
\sum_{x_1,\ldots,x_N=\pm 1}
\prod_{e \in E \atop e=ij}
(1+x_{i}x_{j}\beta_{e}\xi_{i}^{-x_i}\xi_{j}^{-x_j})
\prod_{i \in V}
\frac{\xi_{i}^{x_i}}{\xi_{i}+\xi_{i}^{-1}} \\
&=\Theta_{G}(\bs{\beta},\bs{\gamma}),
\end{align*}
where $\gamma_i:=\xi_i - \xi_i^{-1}$.
This equation implies that the approximation ratio is captured by 
the value of $\Theta_{G}$.
If the graph is a tree,
we see from Eq.~(\ref{Thetaproperty1}) and (\ref{Thetaproperty2})
that $\Theta_{G}=1$,
i.e., the Bethe approximation gives the exact value of the partition
function.
If the weights $\bs{\beta}$ and $\bs{\gamma}$ are sufficiently small,
we see that $\Theta_{G} \approx 1$, i.e., the Bethe approximation is a
good approximation.

The definition of $\Theta_{G}$ implies that
we can expand the approximation ratio by the sum of sub-coregraphs 
\cite{CCloopPRE,CCloop,WFloop}.
This expansion often improves the approximation if we sum up some of
the terms \cite{GMKtruncate}.

\subsection{Transform of Ising partition function} \label{sec:transformIsing}
In the following, we give the explicit transform from
$(\bs{\beta},\bs{\gamma})$ to $(\bs{J},\bs{h})$.

We can always choose $A_i,B_e,h^{'}_i,h_{e,i}$, and $J_e$ to satisfy
\begin{align*}
&\frac{\xi_{i}^{x_i}}{\xi_{i}+\xi_{i}^{-1}} 
=
A_i^{-1} \exp (h_i^{'} x_i), \\
&1+x_{i}x_{j}\beta_{e} \xi_{i}^{-x_i}\xi_{j}^{-x_j}
=
B_{e}^{-1} 
\exp
(J_e x_i x_j + h_{e,i}x_i+h_{e,j}x_j).
\end{align*}  
Therefore, setting 
$h_i:=h_i^{'}+\sum_{e \backepsilon i}h_{e,i}$,
we have
\begin{equation}
Z(G;\bs{J},\bs{h}) 
=
\prod_{i \in V}A_i
\prod_{e \in E}B_e
\
\Theta_{G}(\bs{\beta},(\xi_{i}-\xi_{i}^{-1})_{i \in V}). 
\quad
\label{thetaisingeq}
\end{equation} 
This fact shows that $\Theta_G(\bs{\beta},\bs{\gamma})$ gives the Ising
partition function with $(\bs{J},\bs{h})$, which is computed 
from $(\bs{\beta},\bs{\gamma})$ as above.

If $\xi_i=1$, or $\gamma_i=0$  for all $i \in V$,
Eq.~(\ref{thetaisingeq}) reduces to the well-known expansion given by
van der Waerden \cite{Waerden,Welsh},
\begin{equation}
Z(G;\bs{J},0)  
=
2^{|V|}
\prod_{e \in E}\cosh (J_e)
\sum_{s \in \mathcal{E}}
\prod_{e \in s} \tanh (J_e), \label{VDW}
\end{equation} 
where $\mathcal{E}$ is the set of Eulerian subgraphs,
i.e., subgraphs in which all vertex degrees are even.
This fact is deduced from $f_n(0)=1$ if $n$ is even and 
$f_n(0)=0$ if $n$ is odd.

It is well known by statistical physicists
that Eq.~(\ref{VDW}) can be extended to
the following expression \cite{Dcooperative}:
\begin{equation}
 Z(G;\bs{J}, \bs{h}) = 
2^{|V|}
\prod_{e \in E}\cosh (J_e)
\sum_{s \subset E}
\prod_{e \in s} \tanh (J_e)
\prod_{i \in V_{e}(s)} \hspace{-2mm} \cosh (h_i)
\prod_{i \in V_{o}(s)} \hspace{-2mm} \sinh (h_i), \label{tradising}
\end{equation}
where $V_{e}(s)$ (resp. $V_{o}(s)$) is the set of vertices of even (resp. odd) degree
in $s$.
Although both Eqs.~(\ref{thetaisingeq}) and (\ref{tradising})
are extensions of Eq.~(\ref{VDW}) and give edge subset expansions,
they are different. 
An obvious difference is that only the sub-coregraphs contribute to the expansion in Eq.~(\ref{thetaisingeq}).

Based on  Eq.~(\ref{thetaisingeq}),
we can say that the graph polynomial $\theta_G(\beta,\gamma)$ is a transformed Ising partition function
with uniform coupling constants and non-uniform external fields.
In contrast, 
a bivariate graph polynomial investigated in \cite{AMbivariate}
is based on Eq.~(\ref{tradising}).
This polynomial corresponds to the Ising partition function with uniform coupling constants and external fields.
A similar type of expression is also considered in \cite{Lchromatic}.

\subsection{Additional remarks on weighted graph version}
In this subsection, we present additional remarks on $\Theta_G$
by comparing it with $\theta_{G}$. 
The deletion-contraction relation given by Theorem \ref{thmcd}
is generalized to weighted graphs as follows.
If the weights $(\bs{\beta},\bs{\gamma})$ on $G$ satisfy 
$\gamma_{i}=\gamma_{j}$ for a non-loop edge $e=ij$,
the weights on $G \backslash e$ and $G/e$
are naturally induced and denoted by $(\bs{\beta'},\bs{\gamma'})$
and $(\bs{\beta''},\bs{\gamma''})$, respectively. 
On $G/e$, 
the weight on the new vertex, which is the fusion of $i$ and
$j$, is set to be $\gamma_{i}$.
Under these conditions, we have
\begin{equation}
 \Theta_{G}(\bs{\beta},\bs{\gamma})=
(1-\beta_e) \Theta_{G\backslash e}(\bs{\beta'},\bs{\gamma'}) +
\beta_{e}  \Theta_{G/e}(\bs{\beta''},\bs{\gamma''}),  \label{weightedcd}
\end{equation}
which is proved in the same manner as Theorem \ref{thmcd}.

If we set all vertex weights 
$\gamma_i$ to be equal,
the generalization of Theorem \ref{thmaltrep} holds.
We write 
$\Theta_{G}(\bs{\beta},(\gamma_i = \gamma)_{i \in V})$ by
$\Theta_{G}(\bs{\beta},\gamma)$ 
for simplicity.

\begin{thm} 
\label{cortheta}
\begin{equation}
 \Theta_{G}(\bs{\beta},\gamma)=
\sum_{s \subset E}
\prod_{n=0}
\theta_{B_n}(1,\gamma)^{i_n(s)}
\prod_{e \in s}
\beta_{e}
\prod_{e \in  E \setminus s}
(1-\beta_e ).  \label{corthetaeq1}
\end{equation}
\end{thm}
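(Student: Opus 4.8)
The plan is to obtain Eq.~\eqref{corthetaeq1} from the vertex‑state‑sum representation of Lemma~\ref{thetaidentity}, which already presents $\Theta_G$ as a spin sum, rather than from the Tutte representation used for Theorem~\ref{thmaltrep}. Since both sides of \eqref{corthetaeq1} are polynomials in $\gamma$, it is enough to treat $\gamma=\xi-\xi^{-1}$ for arbitrary $\xi$, the map $\xi\mapsto\xi-\xi^{-1}$ being onto. Setting every vertex parameter $\xi_i$ equal to $\xi$ in \eqref{thetaidentityeq}, each edge factor becomes $1+x_ix_j\beta_e\xi^{-x_i-x_j}$, and I would split it as
\begin{equation}
1+x_ix_j\beta_e\xi^{-x_i-x_j}=(1-\beta_e)\cdot 1+\beta_e\bigl(1+x_ix_j\xi^{-x_i-x_j}\bigr). \nonumber
\end{equation}
Expanding the product over $E$ and indexing the $2^{|E|}$ terms by the set $t\subset E$ of edges on which the second summand is chosen gives
\begin{equation}
\Theta_G(\bs\beta,\gamma)=\sum_{t\subset E}\Bigl(\prod_{e\in t}\beta_e\Bigr)\Bigl(\prod_{e\in E\setminus t}(1-\beta_e)\Bigr)\sum_{\bs x}\ \prod_{e=ij\in t}\bigl(1+x_ix_j\xi^{-x_i-x_j}\bigr)\prod_{i\in V}\frac{\xi^{x_i}}{\xi+\xi^{-1}}. \nonumber
\end{equation}

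Next I would recognize the inner spin sum: it is the right‑hand side of \eqref{thetaidentityeq} for the spanning subgraph $(V,t)$ with every edge weight equal to $1$ and every vertex parameter equal to $\xi$, so by Lemma~\ref{thetaidentity} it equals $\theta_{(V,t)}(1,\gamma)$. By multiplicativity over connected components (Proposition~\ref{propbasictheta}(a)), $\theta_{(V,t)}(1,\gamma)=\prod_C\theta_C(1,\gamma)$, the product taken over the components $C$ of $(V,t)$. It then remains to evaluate $\theta_C(1,\gamma)$ for connected $C$: specializing the deletion–contraction relation of Theorem~\ref{thmcd} to $\beta=1$ kills the $\theta_{C\backslash e}$ term, so $\theta_C(1,\gamma)=\theta_{C/e}(1,\gamma)$ for every non‑loop edge $e$; contracting the edges of a spanning tree of $C$ one at a time (each remaining tree edge still joins two distinct vertices when it is contracted) reduces $C$ to the bouquet on the $n(C)=|E(C)|-|V(C)|+1$ leftover loop‑edges, whence $\theta_C(1,\gamma)=\theta_{B_{n(C)}}(1,\gamma)$. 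Therefore $\prod_C\theta_C(1,\gamma)=\prod_{n\ge0}\theta_{B_n}(1,\gamma)^{i_n(t)}$, and substituting this into the display above yields exactly \eqref{corthetaeq1}.

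The computations are routine; I expect the only delicate point to be organizational — matching the inner spin sum to Lemma~\ref{thetaidentity} with the correct (unit) edge weights and vertex parameter, and tracking that components of nullity $n$ contribute $\theta_{B_n}(1,\gamma)$ with the right multiplicity $i_n(t)$, including isolated vertices (nullity $0$, contributing the harmless factor $\theta_{B_0}(1,\gamma)=1$). Should the spin‑sum manipulation prove inconvenient, an equally workable route is induction on the number of non‑loop edges: using the weighted deletion–contraction relation \eqref{weightedcd} and multiplicativity \eqref{Thetapropertymulti}, one verifies that the right‑hand side of \eqref{corthetaeq1} obeys the same recursion (split the sum over $t$ according to whether $e\in t$, using that contracting a non‑loop edge inside a component leaves that component's nullity unchanged) and agrees with $\Theta_G$ on bouquets, the latter being the short binomial computation $\sum_{t\supseteq s}\prod_{e\in t}\beta_e\prod_{e\notin t}(1-\beta_e)=\prod_{e\in s}\beta_e$ together with Proposition~\ref{propbasictheta}(b) at $\beta=1$.
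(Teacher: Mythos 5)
Your primary argument is correct, and it takes a genuinely different route from the paper. The paper proves the identity by the V-function uniqueness mechanism: it names the right-hand side $\tilde\Theta_G$, checks by a binomial computation that $\tilde\Theta_{B_n}=\Theta_{B_n}$, verifies that $\tilde\Theta$ obeys the same weighted deletion--contraction relation \eqref{weightedcd}, and concludes that both sides reduce identically to bouquet values. You instead work directly from the spin-sum representation of Lemma~\ref{thetaidentity}: the split $1+x_ix_j\beta_e\xi^{-x_i-x_j}=(1-\beta_e)+\beta_e(1+x_ix_j\xi^{-x_i-x_j})$ is exactly the random-cluster-style expansion that turns an edge-factored partition function into a subgraph sum weighted by $\prod_{e\in t}\beta_e\prod_{e\notin t}(1-\beta_e)$, and the inner spin sum is then recognized as $\theta_{(V,t)}(1,\gamma)$, which multiplicativity plus the $\beta=1$ degeneration of Theorem~\ref{thmcd} (contract a spanning tree of each component) collapses to $\prod_n\theta_{B_n}(1,\gamma)^{i_n(t)}$. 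All steps check out, including the treatment of isolated vertices and the reduction from $\xi$ to $\gamma$ (a polynomial identity verified on the image of $\xi\mapsto\xi-\xi^{-1}$, which is infinite, suffices; one only needs to avoid $\xi=\pm\sqrt{-1}$ where the denominator $\xi+\xi^{-1}$ vanishes). What your route buys is that it is constructive and explains where the weights $\theta_{B_n}(1,\gamma)^{i_n(s)}$ come from --- they are the $\beta=1$ partition functions of the components of $s$; what the paper's route buys is generality: as the paper remarks after the theorem, the recursion-plus-boundary argument applies verbatim to any coloured V-function, whereas your expansion leans on the specific spin-sum form of $\Theta_G$. Your fallback induction is essentially the paper's proof.
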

\begin{proof}
In this proof,
the right-hand side of Eq.~(\ref{corthetaeq1}) is 
denoted by $ \tilde{\Theta}_{G}(\bs{\beta},\gamma)$.
First, we check that ${\Theta}_{G}$ and 
$\tilde{\Theta}_{G}$ are equal at the bouquet graphs.
\begin{align}
\tilde{\Theta}_{B_n}(\bs{\beta},\gamma)
&=
\sum_{s \subset E}
\theta_{B_{|s|}}(1,\gamma)
\prod_{e \in s}
\beta_{e}
\prod_{e \in  E \setminus s}
(1-\beta_e ) \nonumber \\
&=
\sum_{s \subset E}
\sum_{k =0}^{|s|}
{|s| \choose k}f_{2k}(\gamma)
\prod_{e \in s}
\beta_{e}
\sum_{t \subset E \setminus s}
\prod_{e \in t}
(-\beta_e ) \nonumber \\
&=
\sum_{u \subset E}
\sum_{s \subset u}
\sum_{k =0}^{|s|}
{|s| \choose k}f_{2k}(\gamma)
(-1)^{|u|-|s|}
\prod_{e \in u}
\beta_e  \nonumber \\
&=
\sum_{u \subset E}
\sum_{l=0}^{|u|}
\sum_{k =0}^{l}
{|u| \choose l}
{l \choose k}f_{2k}(\gamma)
(-1)^{|u|-l}
\prod_{e \in u}
\beta_e. \nonumber
\end{align}
Using the equality
$\sum_{j=k}^{n} {n \choose j}{j \choose k} (-1)^{n+j}=\delta_{n,k}$,
which is obtained at $\beta=0$
of Eq.~(\ref{thmaltrepeq3}),
we have
\begin{equation}
\tilde{\Theta}_{B_n}(\bs{\beta},\gamma)
=
\sum_{u \subset E} f_{2|u|}(\gamma) 
\prod_{e \in u}\beta_{e} 
=
\Theta_{B_n}(\bs{\beta},\gamma). \nonumber
\end{equation}

Second, we see that $ \tilde{\Theta}_{G}(\bs{\beta},\gamma)$
satisfies the deletion-contraction relation 
\begin{equation}
 \tilde{\Theta}_{G}(\bs{\beta},\gamma)=
(1-\beta_e) \tilde{\Theta}_{G\backslash e}(\bs{\beta'},\gamma) +
\beta_{e}  \tilde{\Theta}_{G/e}(\bs{\beta''},\gamma) \nonumber
\end{equation}
for all non-loop edges $e$,
because
the subsets including $e$ amount to
$\beta_e \tilde{\Theta}_{G / e}(\bs{\beta},\gamma)$ 
and
the other subsets
amount to
$(1-\beta_e) \tilde{\Theta}_{G \setminus e}(\bs{\beta},\gamma)$.

By applying this form of deletion-contraction relations to both $\Theta_{G}$
and $\tilde{\Theta}_{G}$, we can reduce the values at $G$ to those of  
disjoint unions of bouquet graphs.
Therefore, we conclude that $\tilde{\Theta}_{G}=\Theta_{G }$.
\end{proof}

A {\it coloured} graph is a graph with a map from the edges to a set of colours.
If it is the set of real numbers, the term ``weighted'' is preferred.  
We can generalize the definition of V-functions to coloured graphs by allowing the coefficients of the deletion-contraction relation
to depend on colours.
Since ${\Theta}_{G}(\bs{\beta},\gamma)$ satisfies Eqs.~(\ref{Thetapropertymulti}) and (\ref{weightedcd}),  
it is a V-function of (edge) weighted graphs.
An expansion similar to Theorem \ref{cortheta} holds for any coloured V-function 
because the proof only uses Eqs.~(\ref{Thetapropertymulti}) and (\ref{weightedcd}).

With regard to the Tutte polynomial, numerous works have focused on its extensions to edge-weighted or coloured versions.
In \cite{BRTutte}, the ``universal'' Tutte polynomial is constructed on coloured graphs
by generalizing the ordinary Tutte polynomial to the greatest extent possible.
The ``universal'' Tutte polynomial derives other extensions of the Tutte polynomial such as 
the dichromatic polynomial for edge-weighted graphs given by Traldi \cite{Tdichromatic} and 
the {\it random-cluster model} by given by Fortuin and Kasteleyn \cite{FKrandom}.

Our extension, ${\Theta}_{G}(\bs{\beta},\gamma)$, for weighted graphs
resembles the random-cluster model defined by
\begin{equation}
R_{G}(\bs{\beta},\kappa)
=
\sum_{s \subset E}
\kappa^{k(s)}
\prod_{e \in s}
\beta_{e}
\prod_{e \in  E \setminus s}
(1-\beta_e ) \nonumber
\end{equation}
because of Eq.~(\ref{corthetaeq1}).
The random-cluster model satisfies a deletion-contraction relation of the form
\begin{equation}
 R_{G}(\bs{\beta},\kappa) = (1-\beta_e) R_{G \backslash e}(\bs{\beta'},\kappa) 
+\beta_e R_{G/e}(\bs{\beta''},\kappa) 
\quad \text{ for all } e \in E. \nonumber
\end{equation}
Note that this relation holds for loops in contrast to
$\Theta_{G}(\bs{\beta},\gamma)$ as
$R_{G}(\bs{\beta},\kappa)$ is an extension of the Tutte polynomial.
This difference arises from that of the coefficients of subgraphs $s$:
$\kappa^{k(s)}$ and $\prod \theta_{B_n}(1,\gamma)^{i_n(s)}$.

\section{Additional properties of $\theta$ and its implications} \label{secfurther}

\subsection{Special values}
\subsubsection{$\gamma =0$ case}
As suggested 
in Section \ref{propthetaTutte},
if we set $\gamma=0$, the polynomial $\theta_{G}(\beta,0)$ is included in 
the Tutte polynomial.
\begin{prop}
\label{thmgamma0}
\begin{equation*}
 \theta_{G}(\beta,0)=
(1-\beta)^{n(G)}\beta^{r(G)}
T_{G}
\Big(
\frac{1}{\beta},\frac{1+\beta}{1-\beta}
\Big).
\end{equation*}
\end{prop}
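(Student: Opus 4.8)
The plan is to start from the subgraph expansion in Definition \ref{deftheta} and use the fact that $f_n(0)$ is the indicator of $n$ being even. Setting $\gamma = 0$ in Eq.~(\ref{defthetaeq}) gives
\begin{equation}
\theta_{G}(\beta,0) = \sum_{s \subset E} \beta^{|s|} \prod_{i \in V} f_{d_i(s)}(0) = \sum_{s \in \mathcal{E}} \beta^{|s|}, \nonumber
\end{equation}
where $\mathcal{E}$ is the set of Eulerian (even) subgraphs, since the product vanishes unless every vertex of $s$ has even degree. So the first step is to identify $\theta_G(\beta,0)$ as the generating function of even subgraphs by their edge count.

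The second step is to recognize this even-subgraph generating polynomial as a known evaluation of the Tutte polynomial. This is a classical fact: the generating function $\sum_{s \in \mathcal{E}} t^{|s|}$ over even subgraphs equals $(1-t)^{n(G)} t^{r(G)}$ times... well, more precisely one uses the standard identity $\sum_{s \in \mathcal{E}} t^{|s|} = t^{r(G)}(1-t)^{n(G)} T_G\!\left(\tfrac{1}{t}, \tfrac{1+t}{1-t}\right)$, or equivalently derives it from the rank-nullity form Eq.~(\ref{defTutte}). I would prove the needed identity directly: substitute $x = 1/\beta$, $y = (1+\beta)/(1-\beta)$ into Eq.~(\ref{defTutte}), so that $x - 1 = (1-\beta)/\beta$ and $y - 1 = 2\beta/(1-\beta)$, and check that
\begin{equation}
(1-\beta)^{n(G)}\beta^{r(G)} \sum_{s \subset E} \left(\frac{1-\beta}{\beta}\right)^{r(G)-r(s)} \left(\frac{2\beta}{1-\beta}\right)^{n(s)} = \sum_{s \in \mathcal{E}} \beta^{|s|}. \nonumber
\end{equation}
After simplifying the powers of $\beta$ and $(1-\beta)$ using $|s| = r(s) + n(s)$ and $r(G) = |V| - k(G)$, the left side becomes $\sum_{s \subset E} \beta^{|s|} \, 2^{n(s)} \, (1-\beta)^{\,n(G) - 2n(s)} \cdot(\text{correction})$; I would reorganize so that the claim reduces to the combinatorial identity that, after summing over all $s$ with a fixed "even part," the $2^{n(s)}$ factors and alternating signs from expanding $(1-\beta)$-powers collapse to leave exactly the even subgraphs. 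Alternatively, and more cleanly, I can invoke the deletion-contraction machinery: by Theorem \ref{thmcd} with $\gamma = 0$, $\theta_G(\beta,0)$ satisfies $\theta_G = (1-\beta)\theta_{G\backslash e} + \beta \theta_{G/e}$ for non-loop $e$, while Proposition \ref{propbasictheta}(b) gives $\theta_{B_n}(\beta,0) = \sum_k \binom{n}{k} f_{2k}(0)\beta^k = (1+\beta)^n$; then I verify that the right-hand side $R_G(\beta) := (1-\beta)^{n(G)}\beta^{r(G)} T_G(1/\beta, (1+\beta)/(1-\beta))$ obeys the same non-loop deletion-contraction relation and the same loop/bridge behavior and the same bouquet values, so the two agree on all graphs by induction on $|E|$.

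The main obstacle is bookkeeping the prefactors $(1-\beta)^{n(G)}\beta^{r(G)}$ correctly under deletion and contraction: deleting a non-loop edge $e$ leaves $k$ unchanged and drops $|E|$ by one, so $n$ drops by one and $r$ is unchanged; contracting a non-loop edge drops $|V|$ by one and $|E|$ by one, leaving $k$ unchanged, so $n$ is unchanged and $r$ drops by one. Feeding these into the Tutte relation $T_G = T_{G\backslash e} + T_{G/e}$ (non-loop, non-bridge case) and matching with the $(1-\beta)$ and $\beta$ coefficients of Theorem \ref{thmcd} is the crux; the bridge and loop cases must be handled separately because $T_G$ has special reductions there, but $\theta_G(\beta,0)$ on a bridge $e$ satisfies $\theta_G(\beta,0) = \theta_{G\backslash e}(\beta,0)$ directly from the even-subgraph description (a bridge lies in no even subgraph), which should match $R_G$. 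I expect the loop case to need the observation that a loop $e$ forces the recursion $\theta_{B_n}(\beta,0) = (1+\beta)\theta_{B_{n-1}}(\beta,0)$, consistent with $T_{B_n} = y\, T_{B_{n-1}}$ and $y - 1 = 2\beta/(1-\beta)$ together with the extra $(1-\beta)$ in the prefactor accounting for the increased nullity — this is the one spot where signs and the factor $2$ interact, so I would check it explicitly.
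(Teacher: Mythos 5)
Your second (``cleaner'') route --- matching the bouquet values $\theta_{B_n}(\beta,0)=(1+\beta)^n$ against $(1-\beta)^{n}T_{B_n}\bigl(\tfrac{1}{\beta},\tfrac{1+\beta}{1-\beta}\bigr)=(1+\beta)^{n}$ and then propagating through the non-loop deletion-contraction relation --- is exactly the paper's proof, which packages the same reduction via the V-functions $\hat{\theta}$ and $\hat{T}$ so that bridges and multiplicativity need no separate treatment and only the bouquet boundary values must be compared. Your first route via the even-subgraph generating function is a genuinely different (and classical) alternative, but as written it stops short of establishing the key identity, so the argument you actually carry to completion coincides with the paper's.
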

\begin{proof}
From Proposition \ref{propbasictheta}.(b)
and $f_{2k}(0)=1$, we have
\begin{equation*}
 \hat{\theta}_{B_n}(\beta,0)=(1-\beta)^{1-n}\beta^{-1}
\sum_{k=0}^{n} {n \choose k} \beta^{k} =(1-\beta)^{1-n}\beta^{-1} (1+\beta)^{n}.
\end{equation*}
We also have 
$\hat{T}_{B_n}(\frac{1}{\beta},\frac{1+\beta}{1-\beta})=(\beta^{-1}-1)(\frac{1+\beta}{1-\beta})^{n}$.
Therefore, 
$\hat{\theta}_{B_n}(\beta,0)=\hat{T}_{B_n}(\frac{1}{\beta},\frac{1+\beta}{1-\beta})$.
Because V-functions are determined by the values at the bouquet graphs,
 $\hat{\theta}_{G}(\beta,0)=\hat{T}_{G}(\frac{1}{\beta},\frac{1+\beta}{1-\beta})$
 holds for any graph $G$.
\end{proof}

This result is natural from the viewpoint of the Ising partition function.
The Tutte polynomial is equivalent to the partition
function of the q-Potts model \cite{Bollobas};
if we set $q=2$, it becomes the Ising partition function (with uniform
coupling constants $J$ and without external fields).
In terms of the Tutte polynomial,
such points correspond to the parameters
$(x,y)=(\frac{1}{\beta},\frac{1+\beta}{1-\beta})$,
and thus, $T_{G}(\frac{1}{\beta},\frac{1+\beta}{1-\beta})$
is essentially the Ising partition function of that type.
On the other hand, as discussed in Section \ref{sec:transformIsing},
$\theta_{G}(\beta,0)$ is also essentially equal to the Ising partition function of that type.
Therefore they must be equal up to some easy factor.

We can say that the Tutte polynomial is an extension of 
the Ising partition function 
(with uniform coupling constants and without external fields)
to the $q$-state model 
whereas
the  $\theta$-polynomial is an extension of it
to a model with specific forms of local external fields.

\subsubsection{$\beta = 1$ case}
At $\beta=1$, $\theta_{G}(1,\gamma)$ is determined by the nullity and the number of connected components
of the graph.
\begin{lem} 
\label{lembeta1}
For a connected graph $G$,
\begin{equation}
\theta_{G}(1,\xi-\xi^{-1})
=
\xi^{1-n(G)}(\xi+\xi^{-1})^{n(G)-1}
+
\xi^{n(G)-1}(\xi+\xi^{-1})^{n(G)-1}.
\label{lembeta1eq1}
\end{equation}
\end{lem}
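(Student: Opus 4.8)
The plan is to exploit the reduction to bouquet graphs that Theorem~\ref{cortheta} and Proposition~\ref{propbasictheta}.(c) provide. Since $\theta_G(1,\gamma)=\theta_{\cg G}(1,\gamma)$ by Proposition~\ref{propbasictheta}.(c), and by applying the deletion-contraction relation of Theorem~\ref{thmcd} at $\beta=1$ (where the $(1-\beta)$-term vanishes) repeatedly, one reduces any connected graph $G$ to a disjoint union of bouquet graphs. More precisely, for a connected $G$, contracting a spanning tree collapses $G$ to a single bouquet $B_{n(G)}$, and at $\beta=1$ the deletion-contraction recursion picks out exactly the contraction term at each non-loop edge, so $\theta_G(1,\gamma)=\theta_{B_{n(G)}}(1,\gamma)$. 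Thus the whole statement reduces to the single identity
\begin{equation}
\theta_{B_n}(1,\xi-\xi^{-1})=\xi^{1-n}(\xi+\xi^{-1})^{n-1}+\xi^{n-1}(\xi+\xi^{-1})^{n-1}. \nonumber
\end{equation}

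To prove this bouquet identity, I would start from the closed form in Proposition~\ref{propbasictheta}.(b), namely $\theta_{B_n}(1,\gamma)=\sum_{k=0}^{n}\binom{n}{k}f_{2k}(\gamma)$, together with the explicit evaluation $f_m(\xi-\xi^{-1})=\dfrac{\xi^{m-1}-(-\xi)^{-m+1}}{\xi+\xi^{-1}}$ established in the proof of Lemma~\ref{thetaidentity}. Substituting $m=2k$ gives $f_{2k}(\xi-\xi^{-1})=\dfrac{\xi^{2k-1}+\xi^{1-2k}}{\xi+\xi^{-1}}$, so
\begin{equation}
\theta_{B_n}(1,\xi-\xi^{-1})=\frac{1}{\xi+\xi^{-1}}\sum_{k=0}^{n}\binom{n}{k}\bigl(\xi^{2k-1}+\xi^{1-2k}\bigr)=\frac{\xi^{-1}(1+\xi^{2})^{n}+\xi(1+\xi^{-2})^{n}}{\xi+\xi^{-1}}. \nonumber
\end{equation}
Then I would simplify $\xi^{-1}(1+\xi^2)^n=\xi^{-1}\xi^n(\xi^{-1}+\xi)^n\cdot\xi^n/\xi^n$; more directly, $(1+\xi^2)^n=\xi^n(\xi+\xi^{-1})^n$ and $(1+\xi^{-2})^n=\xi^{-n}(\xi+\xi^{-1})^n$, whence the numerator equals $(\xi+\xi^{-1})^n(\xi^{n-1}+\xi^{1-n})$ and dividing by $\xi+\xi^{-1}$ yields exactly the claimed right-hand side.

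Alternatively, one can bypass Proposition~\ref{propbasictheta}.(b) entirely and use Lemma~\ref{thetaidentity} (the vertex-state-sum identity) directly at $\bs\beta=1$ for $G=B_n$: with a single vertex carrying variable $\xi$, the right-hand side of Eq.~(\ref{thetaidentityeq}) becomes $\sum_{x=\pm1}(1+\xi^{-2x})^n\,\xi^{x}/(\xi+\xi^{-1})$, which collapses to the same expression after the substitution $x=\pm1$. I expect the only mild obstacle to be bookkeeping: making sure the reduction from $G$ to $B_{n(G)}$ is justified cleanly (loops are untouched by contraction, and each contraction of a non-loop edge decreases $|V|$ by one while preserving $n(G)$, so after $|V|-1$ steps one lands on $B_{n(G)}$), and keeping the powers of $\xi$ straight in the binomial manipulation. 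Neither step is deep; the lemma is essentially a one-line consequence of the bouquet evaluation once the $\beta=1$ collapse is in hand.
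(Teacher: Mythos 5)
Your proof is correct, but it takes a genuinely different route from the paper's. The paper proves the lemma in two lines by specializing the vertex-state-sum identity of Lemma~\ref{thetaidentity} to uniform weights $\beta_e=1$, $\xi_i=\xi$: the edge factor $1+x_ix_j\xi^{-x_i}\xi^{-x_j}$ vanishes whenever $x_i\neq x_j$, so by connectedness only the two constant spin configurations $x\equiv+1$ and $x\equiv-1$ survive, and each one directly contributes one of the two terms $\xi^{\pm(n(G)-1)}(\xi+\xi^{-1})^{n(G)-1}$ after the identification $|E|-|V|=n(G)-1$. You instead first collapse $G$ to the bouquet $B_{n(G)}$ by iterating the $\beta=1$ deletion-contraction relation of Theorem~\ref{thmcd} along a spanning tree (a sound reduction: contracting forest edges never creates loops out of the remaining forest edges, and each step preserves $|E|-|V|$ and connectivity), and then evaluate $\theta_{B_n}(1,\xi-\xi^{-1})$ from Proposition~\ref{propbasictheta}.(b) via the closed form $f_{2k}(\xi-\xi^{-1})=(\xi^{2k-1}+\xi^{1-2k})/(\xi+\xi^{-1})$ and the binomial theorem; your algebra checks out, including the degenerate tree case $n(G)=0$. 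The paper's argument is shorter and makes the combinatorial reason for the two-term answer transparent; yours has the advantage of isolating the identity $\theta_G(1,\gamma)=\theta_{B_{n(G)}}(1,\gamma)$ as an independent structural fact (which the paper only extracts afterwards, in the proof of Lemma~\ref{lemthetacoeff}, as a corollary of this lemma) and of exercising only the V-function machinery rather than the Ising-type state sum.
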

\begin{proof}
We use
the right-hand side of Lemma \ref{thetaidentity}, which gives an alternative representation
of $\theta_{G}$.
If $x_i \neq x_j$, then $1+x_{i}x_{j}\xi^{-x_i}\xi^{-x_j}=0$.
Thus, only two terms of
 $x_1=\cdots=x_N=1$ and $x_1=\cdots=x_N=-1$ contribute to the sum,
because $G$ is connected.
\end{proof}

If $\xi=\frac{1+\sqrt{5}}{2}$, then $\xi-\xi^{-1}=1$. From Eq.~(\ref{lembeta1eq1}),
we see that
\begin{equation}
\theta_{G}(1,1)=
 \Biggl(\frac{5-\sqrt{5}}{2}\Biggr)^{n(G)-1}
\hspace{-2mm}
+
\Biggl(\frac{5+\sqrt{5}}{2}\Biggr)^{n(G)-1}. \label{numup}
\end{equation}
Setting $\xi=1$,
we also deduce from Eq.~(\ref{lembeta1eq1}) that
\begin{equation}
\theta_{G}(1,0)=2^{n(G)}. \label{numlow}
\end{equation}

\subsection{Number of sub-coregraphs}
\subsubsection{Bounds}
For a given graph $G$,
let
$\mathcal{C}(G):=\{ s; s \subset  E, (V,s) \text{ is a coregraph.} \}$
be the set of sub-coregraphs of $G$. 
In the following theorem,
the values (\ref{numup}) and (\ref{numlow})
are used to bound the number of sub-coregraphs.

Although the following upper bound is proved in \cite{WFloop},
here, we present both the proofs of the bounds for completeness.

\begin{thm}
\label{thmbound}
For a connected graph $G$,
\begin{equation}
2^{n(G)}
\leq
\left| \mathcal{C}(G) \right|
\leq
 \Biggl(\frac{5-\sqrt{5}}{2}\Biggr)^{n(G)-1}
\hspace{-2mm}
+
\Biggl(\frac{5+\sqrt{5}}{2}\Biggr)^{n(G)-1}. \label{thmboundeq1}
\end{equation}
The lower bound is attained if and only if
$\cg{G}$ is a subdivision of a bouquet graph,
and the upper bound is attained if and only if 
$\cg{G}$ is a subdivision of a 3-regular graph
or $G$ is a tree.
\end{thm}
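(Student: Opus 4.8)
The plan is to extract both bounds from the evaluation $\theta_{G}(1,\gamma)$ computed in Lemma~\ref{lembeta1}, using Definition~\ref{deftheta}. Setting $\beta=1$ in Eq.~(\ref{defthetaeq}) and using Proposition~\ref{propbasictheta}.(c), we have $\theta_{G}(1,\gamma)=\theta_{\cg{G}}(1,\gamma)=\sum_{s\in\mathcal{C}(G)}\prod_{i}f_{d_i(s)}(\gamma)$. The idea is to choose $\gamma$ cleverly so that the summand $\prod_i f_{d_i(s)}(\gamma)$ is controlled term-by-term, turning the identity into an inequality for $|\mathcal{C}(G)|$. For the lower bound I would take $\gamma=0$: since $f_{2k}(0)=1$ for all $k\geq 0$ and every sub-coregraph has all degrees even... wait, that is false (a sub-coregraph may have vertices of odd degree $\geq 3$), so instead the correct route is to observe $f_n(0)\in\{0,1\}$ with $f_n(0)=1$ iff $n$ is even, whence $\theta_G(1,0)=2^{n(G)}$ counts only the Eulerian sub-coregraphs — a subset of $\mathcal{C}(G)$ — giving $2^{n(G)}\leq|\mathcal{C}(G)|$ once we also note every Eulerian subgraph is automatically a coregraph. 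For the upper bound I would find a value $\gamma^\star$ at which $f_n(\gamma^\star)\geq 1$ for every $n\neq 1$ that can occur as a degree in a coregraph, i.e. for $n=0$ and all $n\geq 2$; the natural candidate, suggested by Eq.~(\ref{numup}), is $\gamma^\star=1$ (so $\xi=\tfrac{1+\sqrt5}{2}$), where $f_0=1$, $f_2=1$, $f_3=1$, $f_4=2$, and $f_n$ grows thereafter. If indeed $f_n(1)\geq 1$ for all $n\neq 1$, then each summand is $\geq 1$, so $|\mathcal{C}(G)|\leq\theta_G(1,1)$, which is the right-hand side of (\ref{thmboundeq1}) by Eq.~(\ref{numup}).

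The monotonicity/positivity facts $f_n(1)\geq 1$ for $n\neq 1$ and $f_n(0)\in\{0,1\}$ follow immediately by induction from the recursion (\ref{defindf}): $f_{n+1}=xf_n+f_{n-1}$ with $x=1$ gives $f_{n+1}(1)=f_n(1)+f_{n-1}(1)$, a Fibonacci-type recursion with $f_0(1)=1,f_2(1)=1$, hence nonnegative and eventually $\geq 1$; one only checks the small cases $f_2(1)=f_3(1)=1$ by hand. So the two inequalities are cheap once the right evaluation points are identified.

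The real work is the equality analysis. Both bounds are tight exactly when every contributing summand equals its extreme value and no non-contributing term is missed. For the lower bound, equality $|\mathcal{C}(G)|=2^{n(G)}$ forces every sub-coregraph to be Eulerian, i.e. $\cg{G}$ has no coregraph subgraph with a vertex of odd degree; I would argue this is equivalent to $\cg{G}$ being a subdivision of a bouquet $B_{n(G)}$ — the "if" direction is a direct count (subdivisions of $B_n$ have exactly $2^n$ closed subgraphs, all Eulerian), and the "only if" direction requires showing that any coregraph that is not such a subdivision contains a sub-coregraph with an odd-degree vertex (equivalently, contains a vertex of degree $\geq 3$ in a way that survives into a closed subgraph, produced e.g. by taking a spanning structure of cycles through a degree-$\geq 3$ vertex and adding one more incident edge's worth of cycle). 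For the upper bound, equality forces $f_{d_i(s)}(1)=1$ for every vertex in every sub-coregraph $s$, i.e. every degree appearing in any closed subgraph lies in $\{0,2,3\}$ — since $f_4(1)=2>1$. This should pin down $\cg{G}$ to be $3$-regular up to subdivision (or $G$ a tree, the degenerate $n(G)=0$ case where $|\mathcal{C}(G)|=1$ matches the formula): a subdivision of a $3$-regular graph has all core-vertices of degree $2$ or $3$, so all closed subgraphs have degrees in $\{0,2,3\}$; conversely, if some core-vertex has degree $\geq 4$ one builds a closed subgraph realizing degree $\geq 4$ at it. The main obstacle is exactly this converse combinatorial step in each equality case — showing that the degree constraints on \emph{all} closed subgraphs simultaneously force the global subdivision structure — and I would handle it by reducing to the coregraph $\cg{G}$ via Proposition~\ref{propbasictheta}.(c), contracting all degree-$2$ vertices to get the underlying graph $H$ with minimum degree $\geq 3$, and then analyzing closed subgraphs of $H$: for the lower bound $H$ must have all degrees forcing only the all-cycles structure (hence one vertex, giving a bouquet), and for the upper bound $H$ must be $3$-regular, each proved by exhibiting an offending closed subgraph whenever the structural hypothesis fails, typically via an ear/cycle-space argument using $n(H)=n(G)$.
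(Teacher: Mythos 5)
Your two inequalities follow exactly the paper's route: reduce to $\cg{G}$ (suppressing degree-$2$ vertices changes neither the nullity nor the set of sub-coregraphs), write $\theta_{G}(1,\gamma)=\sum_{s\in\mathcal{C}(G)}\prod_i f_{d_i(s)}(\gamma)$, and bound each summand at $\gamma=0$ and $\gamma=1$: the summand at $\gamma=0$ is $1$ if $s$ is Eulerian and $0$ otherwise, and the summand at $\gamma=1$ is $\ge 1$ because $f_0(1)=f_2(1)=f_3(1)=1$ and $f_n(1)\ge 2$ for $n\ge 4$ (note $f_4(1)=2$ already, not just $n>4$), with equality iff all degrees of $s$ lie in $\{0,2,3\}$. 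Combined with $\theta_G(1,0)=2^{n(G)}$ and $\theta_G(1,1)$ from Eq.~(\ref{numup}), this is precisely the paper's proof of the inequalities, and that part of your proposal is complete and correct.

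The genuine gap is the ``only if'' direction of the lower-bound equality condition, which you yourself flag as the main obstacle: your proposed construction (``a spanning structure of cycles through a degree-$\ge 3$ vertex plus one more incident edge's worth of cycle'') is not pinned down and is more machinery than is needed. The paper's argument is a one-line parity check that you should substitute: after suppressing degree-$2$ vertices, assume the connected coregraph has every vertex of degree $\ge 3$ and that every sub-coregraph is Eulerian; if it is not a bouquet, pick a non-loop edge $e=i_0j_0$; then both $E$ and $E\setminus\{e\}$ are sub-coregraphs (all degrees stay $\ge 2$), but $d_{i_0}(E)$ and $d_{i_0}(E)-1$ are consecutive integers, so one of these two sub-coregraphs has odd degree at $i_0$ --- a contradiction. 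The converse step for the upper bound is easier and your sketch already contains the right witness: the full edge set $s=E$ realizes every vertex's actual degree, so equality forces all degrees of $\cg{G}$ (after suppression) to be exactly $3$, i.e.\ the core is a subdivision of a $3$-regular graph, with $G$ a tree as the degenerate $n(G)=0$ case.
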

Note that a {\it subdivision} of a graph $G$ is 
a graph that is obtained by adding vertices
of degree 2 on edges.
\begin{proof}
It is sufficient to consider the case in which $G$ is a coregraph
and does not have vertices of degree 2,
because 
the operations of taking core and subdivision
do not essentially change the nullity and the set of sub-coregraphs.

From the definition of Eq.~(\ref{defthetaeq}), we can write
\begin{equation}
\theta_{G}(1,\gamma)
=
\sum_{s \in \mathcal{C}} w(s;\gamma),  \nonumber
\end{equation}
where $w(s;\gamma)=\prod_{i \in V}f_{d_{i}(s)}(\gamma)$.
For all $s \in \mathcal{C}$,
we claim that
\begin{equation}
w(s;0) \leq 1 \leq w(s;1). \label{thmboundeq2}
\end{equation}
The left inequality of Eq.~(\ref{thmboundeq2})
is immediate from the fact that $f_n(0)=1$ if $n$ is even
and $f_n(0)=0$ if $n$ is odd.
The equality holds if and only if
all vertices have even degree in $s$. 
Because $f_n(1)>1$ for all $n>4$ and $f_2(1)=f_3(1)=1$,
we have $w(s;1) \geq 1$.
The equality holds if and only if 
$d_i(s) \leq 3$
for all $i\in V$.
Then,
the inequalities in Eq.~(\ref{thmboundeq1}) 
are proved.
The upper bound is attained
if and only if $G$ is a 3-regular graph or $B_0$.
For the equality condition of the lower bound,
it is sufficient to prove the following claim.

\begin{claim}
Let $G$ be a connected graph, and assume that the degree of every vertex is 
at least 3 and $d_i(s)$ is even for every $i \in V$ and $s \in \mathcal{C}$.
Then, $G$ is a bouquet graph. 
\end{claim}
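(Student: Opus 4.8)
The plan is to argue by contradiction: suppose $G$ is connected, every vertex has degree at least $3$, and for \emph{every} sub-coregraph $s \in \mathcal{C}(G)$ every vertex degree $d_i(s)$ is even; I want to conclude $G$ has a single vertex (so it is a bouquet). The key observation is that the hypothesis severely restricts which edge subsets can be sub-coregraphs: since $\emptyset \in \mathcal{C}(G)$ trivially, the interesting constraint is that whenever we can build a coregraph by selecting edges, all degrees in it must be even. Equivalently, no $s \subset E$ with a vertex of odd degree $\geq 3$ (and no degree-$1$ vertices) may exist. The first step is therefore to show that any two distinct vertices $i,j$ would let us construct such a forbidden $s$.

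So, suppose for contradiction that $G$ has at least two vertices. First I would pick a vertex $v$ of degree $d_v \geq 3$. Because the core operation and subdivisions do not change $\mathcal{C}$ up to the natural correspondence (as already used in the proof of Theorem \ref{thmbound}), I may assume $G$ itself is a coregraph with no degree-$2$ vertices, so \emph{every} vertex has degree $\geq 3$. Now the strategy is to exhibit a sub-coregraph $s$ in which some vertex has odd degree. Take a spanning connected subgraph (e.g.\ a spanning tree $T$ of $G$, which exists since $G$ is connected) and then add back a carefully chosen set of extra edges to kill all degree-$1$ vertices while leaving at least one vertex with odd degree. Concretely: the set of edges incident to $v$ has size $d_v \geq 3$; I would try to form $s$ so that $v$ has degree exactly $3$ (odd), which already contradicts the evenness hypothesis — unless such an $s$ fails to be a coregraph, i.e.\ unless forcing $\deg_s(v)=3$ unavoidably creates a pendant vertex elsewhere. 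The heart of the argument is to show this obstruction can always be circumvented when $|V| \geq 2$: essentially, one shows that the "space" of sub-coregraphs is rich enough (it contains, for instance, symmetric-difference combinations of cycles, and cycles through $v$ exist since $\deg(v)\geq 3 > 1$) that one cannot have all of them simultaneously even at $v$.

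I expect the cleanest route is via a parity / linear-algebra argument over $\mathbb{F}_2$. Consider the cycle space of $G$; every element is an Eulerian subgraph, hence a sub-coregraph, and these automatically have all even degrees — so they give no contradiction. The contradiction must come from sub-coregraphs that are \emph{not} Eulerian. So I would look for a coregraph $s$ with exactly two odd-degree vertices, or exactly four, etc.: take any cycle $C$ through $v$ (exists), take a path $P$ from $v$ to another vertex $w$ that is "thickened" near its endpoints by extra incident edges to avoid pendants — since $\deg(v),\deg(w) \geq 3$ there is room. More carefully: if $|V|\geq 2$, pick an edge $e=vw$; then $G\setminus e$ is still connected or splits into two pieces each with a vertex of degree $\geq 2$; in either case one builds $s$ containing $e$ plus a cycle at $v$ plus a cycle at $w$ so that $\deg_s(v)$ is odd. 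I would phrase it so that the degree-$\geq 3$ hypothesis at every vertex is exactly what provides the "+1 edge" needed to fix parity without creating a degree-$1$ vertex.

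The main obstacle is making the combinatorial construction of the bad sub-coregraph completely general: one must handle multigraph pathologies (multiple edges, the local structure at low-degree vertices exactly $3$) and verify that the constructed $s$ genuinely has \emph{no} degree-$1$ vertex while having \emph{some} odd-degree vertex. I anticipate the slickest formulation is: assume $G$ is a coregraph of minimum degree $\geq 3$ with $|V|\geq 2$; choose $v$ with a neighbour via edge $e$; the subgraph $s_0$ consisting of $e$ together with a spanning tree of each component of $G\setminus e$, pruned of all leaves, is a coregraph; then a single additional edge at $v$ (available since $\deg_G(v)\geq 3$, so at least one incident edge is still unused or can be toggled) changes $\deg_s(v)$ to an odd value, contradicting the hypothesis. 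Hence $|V|=1$ and $G=B_n$ for $n=\tfrac12\deg(v)$. The delicate verification is that "pruning leaves" preserves connectivity of the relevant part and that the toggled edge does not itself create a new leaf — which is where the uniform lower bound of $3$ on degrees does the work.
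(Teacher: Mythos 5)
There is a genuine gap: you correctly identify the right mechanism (toggling a single edge at a vertex flips the parity of its degree, so you need two sub-coregraphs differing in exactly one edge incident to some vertex), but you never actually produce a valid such pair. The concrete construction you settle on fails: if $s_0$ consists of $e$ together with ``a spanning tree of each component of $G\setminus e$, pruned of all leaves,'' then pruning a tree of all its leaves iteratively leaves no edges at all (the core of a forest is edgeless), so $s_0$ collapses to $\{e\}$, which has two vertices of degree one and is not a coregraph. Your earlier candidates (cycles through $v$ plus paths ``thickened'' at their endpoints) are likewise never verified to be coregraphs, and you explicitly flag the verification as incomplete. So as written the argument does not close.

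The fix is much simpler than anything you attempt, and it is what the paper does: since every vertex of $G$ has degree at least $3$, the full edge set $E$ is itself a sub-coregraph, and for any non-loop edge $e=i_0j_0$ (which exists if $G$ is not a bouquet, as $G$ is connected with at least two vertices) the set $E\setminus e$ is also a sub-coregraph, because the only degrees that drop are those of $i_0$ and $j_0$, which remain at least $2$. Then $d_{i_0}(E)$ and $d_{i_0}(E\setminus e)=d_{i_0}(E)-1$ have opposite parities, so one of them is odd, contradicting the hypothesis. No spanning trees, cycle spaces, or pruning are needed; the hypothesis that \emph{all} degrees are at least $3$ hands you the two required sub-coregraphs for free. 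Your instinct about parity was right, but the proposal substitutes an elaborate and ultimately broken construction for the one-line choice $s=E$, $s=E\setminus e$.
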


If $G$ is not a bouquet graph, there exists a non-loop edge $e=i_0j_0$.
Then, $E$ and $E \setminus e$ are sub-coregraphs of $G$.
Thus, $d_{i_0}(E)$ or $d_{i_0}(E \setminus e)=d_{i_0}(E)-1$ is odd.
This is a contradiction.
\end{proof}

\subsubsection{Number of sub-coregraphs in 3-regular graphs}
If the core of a graph is a subdivision of a 3-regular graph,
we obtain more information on the number of specific types of sub-coregraphs. 

We can rewrite Lemma \ref{lembeta1} as follows.
\begin{lem} 
\label{lemthetacoeff}
Let $G$ be connected and not a tree. Then, we have
\begin{equation}
\theta_{G}(1,\gamma)
=
\sum_{l=0}^{n(G)-1} C_{n(G),l} \gamma^{2l},  \nonumber
\end{equation}
where $C_{n,l}:=\sum_{k=l+1}^{n}\binom{n}{k}\binom{k+l-1}{2l}$ for 
$1 \leq l \leq n-1$ and $C_{n,0}:= 2^{n}$.
\end{lem}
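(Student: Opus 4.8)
The plan is to derive the claimed polynomial identity directly from Lemma \ref{lembeta1} by expanding its right-hand side in powers of $\gamma = \xi - \xi^{-1}$ and matching coefficients. First I would set $n := n(G)$ and note that Lemma \ref{lembeta1} gives
\begin{equation}
\theta_{G}(1,\xi-\xi^{-1}) = (\xi+\xi^{-1})^{n-1}\bigl(\xi^{1-n} + \xi^{n-1}\bigr). \nonumber
\end{equation}
Since $\theta_{G}(1,\gamma)$ is a genuine polynomial in $\gamma$ (immediate from Definition \ref{deftheta}), and since the right-hand side above is symmetric under $\xi \mapsto \xi^{-1}$ — hence a symmetric Laurent polynomial in $\xi$ — it is a polynomial in $\gamma^2 = (\xi-\xi^{-1})^2 = \xi^2 - 2 + \xi^{-2}$. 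The degree bookkeeping is easy: the highest power of $\xi$ appearing is $\xi^{2(n-1)}$, which forces the degree in $\gamma$ to be at most $2(n-1)$, so the expansion has the stated shape $\sum_{l=0}^{n-1} C_{n,l}\gamma^{2l}$ for some integers $C_{n,l}$. This already recovers the qualitative part of the statement; the remaining work is to pin down the coefficients $C_{n,l}$.

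For the explicit formula I would proceed as follows. Substitute $\xi = e^{t}$ so that $\gamma = 2\sinh t$ and $\xi + \xi^{-1} = 2\cosh t$, giving $\theta_{G}(1,\gamma) = (2\cosh t)^{n-1}\cdot 2\cosh((n-1)t)$; alternatively, and more in the spirit of the paper, expand everything combinatorially. Write $\xi + \xi^{-1} = \gamma + 2\xi^{-1}$ is not clean, so instead I would use the binomial expansion $(\xi + \xi^{-1})^{n-1} = \sum_{k} \binom{n-1}{k}\xi^{n-1-2k}$ together with $\xi^{n-1} + \xi^{1-n}$, multiply out, collect the coefficient of $\xi^{m} + \xi^{-m}$ for each $m\ge 0$, and then re-expand each $\xi^{m}+\xi^{-m}$ in terms of $\gamma^2$ using the known expansion of $2\cosh(m t)$ (equivalently $\xi^m + \xi^{-m}$) as a polynomial in $\xi + \xi^{-1}$, and then $(\xi+\xi^{-1})^2 = \gamma^2 + 4$ one more layer down. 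A cleaner route: use the identity $\xi^{j} + \xi^{-j} = \sum_{l\ge 0} \frac{j}{j-l}\binom{j-l}{l}(-1)^{l}(\xi+\xi^{-1})^{j-2l}$-type formulas, but the most economical is to directly verify that the polynomial $\sum_{l=0}^{n-1} C_{n,l}\gamma^{2l}$ with $C_{n,l} = \sum_{k=l+1}^{n}\binom{n}{k}\binom{k+l-1}{2l}$ satisfies, after the substitution $\gamma = \xi - \xi^{-1}$, the closed form in Lemma \ref{lembeta1}. That verification amounts to the generating-function identity
\begin{equation}
\sum_{l\ge 0}\Bigl(\sum_{k=l+1}^{n}\binom{n}{k}\binom{k+l-1}{2l}\Bigr)(\xi-\xi^{-1})^{2l} = (\xi+\xi^{-1})^{n-1}(\xi^{n-1}+\xi^{1-n}), \nonumber
\end{equation}
which I would prove by swapping the order of summation, fixing $k$, and recognizing $\sum_{l}\binom{k+l-1}{2l}(\xi-\xi^{-1})^{2l}$ as a hypergeometric-type sum that evaluates to $\xi^{k-1} + \xi^{1-k}$ or a similar Chebyshev-like closed form; indeed the $f_n$ of Eq.~(\ref{defindf}) and the remark that they are rescaled Chebyshev polynomials of the second kind make this plausible, and Lemma \ref{lemf} may shorten it.

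The main obstacle I anticipate is precisely this inner combinatorial identity $\sum_{l}\binom{k+l-1}{2l}(\xi-\xi^{-1})^{2l} = \xi^{k-1}+\xi^{1-k}$ (or whatever its exact correct form turns out to be after careful index-tracking). It is the kind of identity that is "known" — a standard expansion relating $\cosh(k t)$ or Chebyshev $T$-polynomials to powers of $\sinh t$ — but getting the binomial coefficients, the shift by one in $k+l-1$, and the range $k \ge l+1$ all consistent requires care. A safe fallback is to avoid the closed form entirely and instead prove the two representations agree by checking they satisfy the same recursion in $n$: both $\theta_{G}(1,\gamma)$ (via Lemma \ref{lembeta1}, which gives $p_n(\gamma) := (\xi+\xi^{-1})^{n-1}(\xi^{n-1}+\xi^{1-n})$ satisfying $p_{n+1} = (\gamma^2+2)p_n - (\gamma^2+4)\cdots$, a three-term recursion derived from $\xi^{n}+\xi^{-n} = (\xi+\xi^{-1})(\xi^{n-1}+\xi^{1-n}) - (\xi^{n-2}+\xi^{2-n})$) and the proposed coefficient array $C_{n,l}$ (via a Pascal-type recursion on $\binom{n}{k}$ and $\binom{k+l-1}{2l}$) obey matching recursions, with the base cases $n=1$ ($\theta_{G}(1,\gamma)=2$, $C_{1,0}=2$) and $n=2$ checked by hand. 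This reduces everything to elementary induction and sidesteps any slick but error-prone hypergeometric manipulation.
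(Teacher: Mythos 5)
Your strategy works and lands in the same circle of ideas as the paper, but it is organized differently and is more laborious than it needs to be. The paper does not expand the closed form $(\xi+\xi^{-1})^{n-1}(\xi^{n-1}+\xi^{1-n})$ at all: it uses Lemma \ref{lembeta1} only to conclude that $\theta_G(1,\gamma)$ depends on a connected $G$ solely through $n(G)$, hence $\theta_G(1,\gamma)=\theta_{B_{n(G)}}(1,\gamma)$, and then invokes Proposition \ref{propbasictheta}(b) to write this as $\sum_{k=0}^{n}\binom{n}{k}f_{2k}(\gamma)$. The only remaining ingredient is the expansion $f_{2k}(\gamma)=\sum_{l=0}^{k-1}\binom{k+l-1}{2l}\gamma^{2l}$ for $k\ge 1$, proved by a one-line induction on the recurrence (\ref{defindf}); swapping the order of summation then gives $C_{n,l}$ directly. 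This is exactly the ``main obstacle'' identity you isolate, in its correct form: since $f_{2k}(\xi-\xi^{-1})=(\xi^{2k-1}+\xi^{1-2k})/(\xi+\xi^{-1})$, your guessed right-hand side $\xi^{k-1}+\xi^{1-k}$ is not quite it, but the induction from (\ref{defindf}) settles it without any hypergeometric work, so your hedge is justified. Two smaller points. First, your displayed generating-function identity is off by $1$: the inner sum $\sum_{k=l+1}^{n}\binom{n}{k}\binom{k+l-1}{2l}$ at $l=0$ equals $2^{n}-1$, and the missing $1$ is the $k=0$ term $f_0(\gamma)=1$; this is precisely why the lemma defines $C_{n,0}:=2^{n}$ separately rather than by the sum formula. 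Second, your qualitative step (that the closed form is an even polynomial in $\gamma$ of degree at most $2(n-1)$) and your recursion fallback are both fine, but they become unnecessary once you route the argument through Proposition \ref{propbasictheta}(b), which already presents $\theta_{B_n}(1,\gamma)$ as a sum of $f_{2k}$'s.
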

\begin{proof}
First, we note that
for $k \geq 1$,
\begin{equation}
f_{2k}(\gamma)=\sum_{l=0}^{k-1}\binom{k+l-1}{2l}\gamma^{2l}  
\quad  \text{  and  } \quad
f_{2k+1}(\gamma)=\sum_{l=0}^{k-1}\binom{k+l}{2l+1}\gamma^{2l+1}.  \nonumber
\end{equation}
This is easily proved inductively using
Eq.~(\ref{defindf}).
Then, Lemma \ref{lembeta1} gives
\begin{align*}
\theta_{G}(1,\gamma)
=
\theta_{B_{n(G)}}(1,\gamma)
&=\sum_{k=1}^{n(G)} \binom{n(G)}{k} f_{2k}(\gamma)
+f_{0}(\gamma) \\
&=
\sum_{l=0}^{n(G)-1}\sum_{k=l+1}^{n(G)} \binom{n(G)}{k} \binom{k+l-1}{2l}\gamma^{2l}
+1 \\
&=
\sum_{l=0}^{n(G)-1} C_{n(G),l} \gamma^{2l}.
\end{align*}
\end{proof}

\begin{thm}
Let $G$ be a connected graph and not a tree.
If every vertex of the $\cg{G}$ has a degree of at most $3$,
then 
\begin{equation}
C_{n(G),l}=  
|\{ s \in \mathcal{C}(G) ; s \text{ has }2l \text{ vertices of degree
3.} \}|  \nonumber
\end{equation}
for $0 \leq l \leq n(G)-1$.
\end{thm}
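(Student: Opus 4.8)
The plan is to compare the polynomial identity of Lemma \ref{lemthetacoeff} with a direct combinatorial expansion of $\theta_G(1,\gamma)$ coming from the definition in Eq.~(\ref{defthetaeq}). Since $G$ is connected and not a tree, we may by Proposition \ref{propbasictheta}.(c) and the remark at the start of the proof of Theorem \ref{thmbound} assume without loss of generality that $G$ is a coregraph with no vertices of degree $2$ — taking the core and suppressing degree-$2$ vertices changes neither $n(G)$ nor the set $\mathcal{C}(G)$ nor the degrees that are $3$ in any sub-coregraph. Under the hypothesis that every vertex of $\core(G)$ has degree at most $3$, this means every vertex of $G$ has degree exactly $3$, i.e.\ $G$ is $3$-regular; hence for any $s \in \mathcal{C}(G)$ every vertex has degree $0$, $2$, or $3$ in $s$.

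Next I would write out $\theta_G(1,\gamma) = \sum_{s \in \mathcal{C}(G)} \prod_{i \in V} f_{d_i(s)}(\gamma)$ using the values $f_0(\gamma)=1$, $f_2(\gamma)=1$, and $f_3(\gamma)=\gamma$ from Eq.~(\ref{defindf}). For a sub-coregraph $s$ in which a vertex $i$ has degree $0$ or $2$, the factor $f_{d_i(s)}(\gamma)$ equals $1$; for a vertex of degree $3$ the factor is $\gamma$. Therefore $\prod_{i \in V} f_{d_i(s)}(\gamma) = \gamma^{m(s)}$, where $m(s)$ is the number of vertices of degree $3$ in $s$. This gives the clean expansion
\begin{equation}
\theta_G(1,\gamma) = \sum_{s \in \mathcal{C}(G)} \gamma^{m(s)} = \sum_{j \ge 0} N_j\, \gamma^{j}, \nonumber
\end{equation}
where $N_j := |\{ s \in \mathcal{C}(G) : m(s) = j \}|$.

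Finally I would argue that $m(s)$ is always even, so that $N_j = 0$ for odd $j$ and the sum is genuinely a polynomial in $\gamma^2$; this follows from a parity/handshake argument, since $\sum_i d_i(s) = 2|s|$ is even while $\sum_i d_i(s) \equiv m(s) \pmod 2$ (vertices of degree $0$ or $2$ contribute evenly and vertices of degree $3$ contribute the parity of $1$). Writing $j = 2l$, we then have $\theta_G(1,\gamma) = \sum_{l \ge 0} N_{2l} \gamma^{2l}$, and comparing with Lemma \ref{lemthetacoeff}, which asserts $\theta_G(1,\gamma) = \sum_{l=0}^{n(G)-1} C_{n(G),l}\gamma^{2l}$, and using that a polynomial identity forces equality of coefficients, we conclude $C_{n(G),l} = N_{2l}$, which is exactly the claimed formula (with $N_{2l}=0$ automatically for $l \ge n(G)$, consistent with the stated range). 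The main obstacle — really the only nontrivial point — is justifying the reduction to the $3$-regular coregraph case carefully, in particular checking that subdivision and core-taking set up a degree-preserving bijection on sub-coregraphs that preserves the count of degree-$3$ vertices; the parity argument and the coefficient comparison are routine.
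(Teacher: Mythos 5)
Your core argument is exactly the paper's proof: since every vertex of $\core(G)$ has degree at most $3$ and a sub-coregraph has no vertex of degree $1$, every vertex has degree $0$, $2$, or $3$ in any $s \in \mathcal{C}(G)$, so $\prod_{i}f_{d_i(s)}(\gamma) = \gamma^{m(s)}$ with $m(s)$ the number of degree-$3$ vertices of $s$; comparing the resulting expansion of $\theta_G(1,\gamma)$ with Lemma \ref{lemthetacoeff} and equating coefficients gives the claim. (The paper's proof consists of essentially the single sentence about $\prod_i f_{d_i(s)}$; your explicit handshake argument for the evenness of $m(s)$ is a reasonable elaboration, though the identity with Lemma \ref{lemthetacoeff} already forces the odd-index counts to vanish since they are non-negative integers equal to zero coefficients.)

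The one thing to push back on is the step you yourself single out as ``the main obstacle'': the reduction to a $3$-regular coregraph. First, it is not needed. The observation that $d_i(s) \in \{0,2,3\}$ for every $i$ and every $s \in \mathcal{C}(G)$ follows directly from the hypothesis on $\core(G)$ together with $f_1 = 0$ (one only needs that every sub-coregraph of $G$ has all its edges inside $\core(G)$, which is the same fact underlying $\mathcal{C}(G) = \mathcal{C}(\core(G))$), and then $f_0 = f_2 = 1$ and $f_3(\gamma) = \gamma$ give $\prod_i f_{d_i(s)}(\gamma) = \gamma^{m(s)}$ with no regularity assumption at all. Second, as stated the reduction is false in one case: if $n(G) = 1$, then $\core(G)$ is a cycle, and suppressing degree-$2$ vertices terminates at $B_1$, whose unique vertex has degree $2$, not $3$; so the assertion that ``every vertex of $G$ has degree exactly $3$'' does not follow. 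The theorem is of course trivially true there ($C_{1,0} = 2$, and $\mathcal{C}(G)$ consists of the empty set and the cycle, both with no degree-$3$ vertices), but your write-up would need to treat that case separately. Dropping the reduction entirely both removes this gap and shortens the proof to what the paper actually does.
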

\begin{proof}
For a sub-coregraph $s$,
$\prod_{i \in V}f_{d_{i}(s)}(\gamma)=\gamma^{2l}$ 
if and only if $s$ has $2l$ vertices of degree $3$.
\end{proof}

\section{One-variable graph polynomial $\omega$} \label{secomega}

In this section, we define the second graph polynomial
$\omega$ by setting $\gamma=2\sqrt{-1}$.
Using Eq.~(\ref{defindf}),
it is easy to verify that
$f_n(2\sqrt{-1})=(\sqrt{-1})^{n}(1-n)$.
Therefore,
\begin{equation}
\theta_{G}(\beta,2\sqrt{-1})
=
\sum_{s \subset E} 
(- \beta)^{|s|}
\prod_{i \in V}(1-d_i(s)). \label{theta2i} \\ 
\end{equation}
An interesting point of this specialization is 
the relation to the monomer-dimer partition function
with the specific form of monomer-dimer weights,
as described in Section \ref{subsectionomegamonomerdimer}.

\subsection{Definition and basic properties}
From Eq.~(\ref{lembeta1eq1}), $\theta_G(1,2\sqrt{-1})=0$
unless all the nullities of connected components of $G$ are less than $2$.
The following theorem asserts that $\theta_G(\beta,2\sqrt{-1})$ can be divided by
$(1-\beta)^{|E|-|V|}$.
We define $\omega_G$ by dividing that factor.
\begin{thm}
\label{thmomega}
\begin{equation}
 \omega_{G}(\beta):= \frac{\theta_{G}(\beta,2 \sqrt{-1})}{(1-\beta)^{|E|-|V|}} \quad
 \in \mathbb{Z}[\beta].  \nonumber
\end{equation}
\end{thm}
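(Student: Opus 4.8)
The plan is to show divisibility by $(1-\beta)^{|E|-|V|}$ by working with the V-function $\hat\theta$ and the alternative expansion from Theorem~\ref{cortheta} (or equivalently Theorem~\ref{thmaltrep}), rather than grinding directly on the subgraph sum in Eq.~(\ref{theta2i}). First I would recall that by Theorem~\ref{thmaltrep},
\begin{equation}
\theta_{G}(\beta,2\sqrt{-1})=
\sum_{s \subset E}\prod_{n=0}\theta_{B_n}(1,2\sqrt{-1})^{i_n(s)}\,\beta^{|s|}(1-\beta)^{|E|-|s|}. \nonumber
\end{equation}
By Lemma~\ref{lembeta1} (via Eq.~(\ref{lembeta1eq1})) evaluated at $\xi=\sqrt{-1}$, one gets $\theta_{B_n}(1,2\sqrt{-1})=0$ for $n\ge 2$, $\theta_{B_0}(1,2\sqrt{-1})=2$, and $\theta_{B_1}(1,2\sqrt{-1})=1$. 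Hence the only subgraphs $s$ contributing to the sum are those all of whose connected components have nullity $0$ or $1$, i.e.\ each component is a tree or a tree-plus-one-edge (unicyclic). For such $s$ one has $|s|\le |V|$ with a precise count: if $s$ has $c$ components and $u$ of them are unicyclic, then $|s|=|V|-c+u$, so $|E|-|s|=|E|-|V|+c-u=|E|-|V|+(\text{number of tree components})\ge |E|-|V|$. Therefore every surviving term carries a factor $(1-\beta)^{|E|-|V|}$, and the quotient lies in $\mathbb{Z}[\beta]$.

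The remaining point is to make the claim ``only tree/unicyclic-componented $s$ survive'' fully rigorous: a component of nullity $n$ contributes the factor $\theta_{B_n}(1,2\sqrt{-1})^{1}$ through the exponent $i_n(s)$, and since this vanishes for $n\ge 2$, any $s$ with a component of nullity $\ge 2$ contributes $0$. So the sum is genuinely supported on the family described above, and the degree bound $|E|-|s|\ge |E|-|V|$ is uniform over that family. I would write the combinatorial identity $|s|=\sum_{\text{components }T}|E(T)|=\sum_T(|V(T)|+n(T)-1)=|V|-c+\sum_T n(T)$, and then use $n(T)\in\{0,1\}$ to conclude $|s|\le |V|$ with equality of the exponent deficit to the count of nullity-$0$ components.

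An alternative, perhaps cleaner, route is via $\hat\theta$: from the definition $\hat\theta_G=(1-\beta)^{-|E|+|V|}\beta^{-|V|}\theta_G$, the statement is equivalent to $\beta^{|V|}\hat\theta_G(\beta,2\sqrt{-1})\in\mathbb{Z}[\beta]$, i.e.\ that $\hat\theta_G(\beta,2\sqrt{-1})$ has no pole at $\beta=1$ and at worst a pole of order $|V|$ at $\beta=0$. Since $\hat\theta$ is a V-function determined by its values on bouquets, and $\hat\theta_{B_n}(\beta,2\sqrt{-1})=(1-\beta)^{1-n}\beta^{-1}\sum_{k}\binom{n}{k}f_{2k}(2\sqrt{-1})\beta^k$, one checks directly from $f_{2k}(2\sqrt{-1})=(-1)^k(1-2k)$ that $\sum_k\binom{n}{k}(-1)^k(1-2k)\beta^k$ is divisible by $(1-\beta)^{n-1}$ (it equals $(1-\beta)^{n-1}$ times a linear polynomial, as one sees from $\sum_k\binom{n}{k}(1-2k)z^k=(1+z)^{n-1}(1-(2n-1)z)$ with $z=-\beta$), so each $\hat\theta_{B_n}(\beta,2\sqrt{-1})$ already lies in $\beta^{-1}\mathbb{Z}[\beta]$; then the deletion--contraction recursion $\hat\theta_G=\hat\theta_{G\backslash e}+\hat\theta_{G/e}$ (valid for non-loop $e$) and $\hat\theta_{G_1\cup G_2}=\hat\theta_{G_1}\hat\theta_{G_2}$ propagate integrality-up-to-$\beta^{|V|}$ by induction on $|E|$, since contraction of a non-loop edge drops $|V|$ by one while $\beta$-degree bookkeeping matches, and disjoint union/loop-addition multiply by factors in $\beta^{-1}\mathbb{Z}[\beta]$.

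The main obstacle is bookkeeping the power of $(1-\beta)$ correctly and uniformly: one must verify that the vanishing of $\theta_{B_n}(1,2\sqrt{-1})$ for $n\ge2$ is exactly what forces the surviving subgraphs to be "almost-forests" and that for those the exponent $|E|-|s|$ never drops below $|E|-|V|$. Everything else — the evaluation $f_n(2\sqrt{-1})=(\sqrt{-1})^n(1-n)$, the bouquet values, and the induction — is routine given the results already in the excerpt.
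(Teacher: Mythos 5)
Your main argument is essentially the paper's own proof: the paper deduces Theorem~\ref{thmomega} from Theorem~\ref{thmomegaaltrep}, which likewise substitutes $\gamma=2\sqrt{-1}$ into the expansion of Theorem~\ref{thmaltrep}, uses $\theta_{B_n}(1,2\sqrt{-1})=0$ for $n\ge 2$, and absorbs the surviving powers of $(1-\beta)$ componentwise, exactly as in your count $|E|-|s|=|E|-|V|+(\text{number of tree components})$. One small slip: the correct values are $\theta_{B_0}(1,2\sqrt{-1})=1$ and $\theta_{B_1}(1,2\sqrt{-1})=2$ (you have them swapped), but since both are nonzero integers this does not affect the divisibility conclusion, and your alternative induction via $\hat\theta$ is also sound.
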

In Eq.~(\ref{theta2i}), $\theta_{G}(\beta,2 \sqrt{-1})$ is given in the summation over all
sub-coregraphs and each term is not necessarily divisible by $(1-\beta)^{|E|-|V|}$;
however, if we use the representation in Theorem \ref{thmaltrep}, each summand
is divisible by the factor, 
as shown in the following theorem.
Theorem \ref{thmomega} is a trivial consequence of Theorem \ref{thmomegaaltrep}.
\begin{thm}
\label{thmomegaaltrep}
\begin{equation}
\omega_{G}(\beta)
=
\sum_{s \subset E } 
\beta^{|s|}
\prod_{n=0}
h_{n}(\beta)^{i_n(s)}, \nonumber
\end{equation}
where $h_0(\beta):=(1-\beta)$, $h_1(\beta):=2$, and 
$h_n(\beta):=0$ for $n \geq 2$. 
\end{thm}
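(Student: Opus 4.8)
The plan is to start from the alternative representation of $\theta_G$ given in Theorem \ref{thmaltrep}, namely
\begin{equation}
\theta_{G}(\beta,\gamma)=
 \sum_{s \subset E}
\prod_{n=0}
\theta_{B_n}(1,\gamma)^{i_n(s)}
\beta^{|s|}
(1-\beta)^{|E|-|s|}, \nonumber
\end{equation}
and specialize $\gamma = 2\sqrt{-1}$. The key observation is that by Proposition \ref{propbasictheta}.(b) together with the identity $f_n(2\sqrt{-1})=(\sqrt{-1})^{n}(1-n)$ recorded just before Section \ref{subsectionomegamonomerdimer}, the bouquet values $\theta_{B_n}(1,2\sqrt{-1})$ can be computed explicitly: $\theta_{B_0}(1,2\sqrt{-1})=1$, $\theta_{B_1}(1,2\sqrt{-1})=1+f_2(2\sqrt{-1})=0$ — wait, one must check the sign carefully, since $f_2(2\sqrt{-1})=(\sqrt{-1})^2(1-2)=1$, so $\theta_{B_1}(1,2\sqrt{-1})=1+1=2$ — and $\theta_{B_n}(1,2\sqrt{-1})=0$ for $n\ge 2$ because Lemma \ref{lembeta1} (or rather Eq.~(\ref{lembeta1eq1})) forces vanishing when the nullity is at least $2$. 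So I would first establish, as a short lemma, the three values $\theta_{B_0}(1,2\sqrt{-1})=1$, $\theta_{B_1}(1,2\sqrt{-1})=2$, $\theta_{B_n}(1,2\sqrt{-1})=0$ $(n\ge 2)$, using the closed form of $f_n(2\sqrt{-1})$ and either a direct binomial sum or Eq.~(\ref{lembeta1eq1}).

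Next I would substitute these values into the specialization of Theorem \ref{thmaltrep}. Since $i_0(s)$ counts the tree (nullity-zero) components of $(V,s)$, $i_1(s)$ counts the unicyclic components, and $i_n(s)$ for $n\ge 2$ counts components of higher nullity, the product $\prod_{n} \theta_{B_n}(1,2\sqrt{-1})^{i_n(s)}$ is $1^{i_0(s)}\cdot 2^{i_1(s)}\cdot\prod_{n\ge2}0^{i_n(s)}$; this vanishes unless every component of $s$ has nullity at most $1$, and otherwise equals $2^{i_1(s)}$. Thus
\begin{equation}
\theta_{G}(\beta,2\sqrt{-1}) =
\sum_{s} 2^{i_1(s)} \beta^{|s|} (1-\beta)^{|E|-|s|}, \nonumber
\end{equation}
the sum over $s\subset E$ all of whose components have nullity $\le 1$. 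The crucial bookkeeping step is the exponent of $(1-\beta)$: for such an $s$ one has $|s| = |V| - i_0(s) - i_1(s) + i_1(s) = |V| - i_0(s)$ — more precisely, each nullity-zero component contributes (vertices $-1$) edges and each nullity-one component contributes exactly (vertices) edges, so $|s| = (|V| - i_0(s) - i_1(s)) + i_1(s)$ summed appropriately; I would write this out as $|s| = |V| - i_0(s)$, giving $|E|-|s| = (|E|-|V|) + i_0(s)$. Hence $(1-\beta)^{|E|-|s|} = (1-\beta)^{|E|-|V|}\,(1-\beta)^{i_0(s)}$, and dividing by $(1-\beta)^{|E|-|V|}$ yields exactly $\sum_s 2^{i_1(s)} (1-\beta)^{i_0(s)} \beta^{|s|}$ over the same index set.

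Finally I would match this against the claimed formula. With $h_0(\beta)=1-\beta$, $h_1(\beta)=2$, $h_n(\beta)=0$ for $n\ge2$, the product $\prod_{n} h_n(\beta)^{i_n(s)}$ is zero unless all components of $s$ have nullity $\le 1$, and equals $(1-\beta)^{i_0(s)} 2^{i_1(s)}$ otherwise — precisely the summand obtained above. Therefore $\omega_G(\beta) = \sum_{s\subset E}\beta^{|s|}\prod_n h_n(\beta)^{i_n(s)}$, which in particular exhibits $\omega_G(\beta)$ as a polynomial in $\mathbb{Z}[\beta]$ (each summand is), establishing both Theorem \ref{thmomegaaltrep} and, as its corollary, Theorem \ref{thmomega}. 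I expect the main obstacle to be purely organizational rather than deep: getting the $(1-\beta)$-exponent bookkeeping exactly right (the relation $|s| = |V| - i_0(s)$ valid on the support of the sum), and being careful with the factor-of-$\sqrt{-1}$ signs when evaluating $\theta_{B_1}(1,2\sqrt{-1})$; there is no genuine difficulty once Theorem \ref{thmaltrep} is in hand.
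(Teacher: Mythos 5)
Your proposal is correct and follows essentially the same route as the paper: specialize Theorem \ref{thmaltrep} at $\gamma=2\sqrt{-1}$, compute $\theta_{B_n}(1,2\sqrt{-1})=1,2,0$ for $n=0,1,\ge 2$, and absorb the surviving power of $(1-\beta)$ into the per-component factors. The only cosmetic difference is that the paper distributes $(1-\beta)^{|V|-|s|}=\prod_n(1-\beta)^{(1-n)i_n(s)}$ uniformly over all $s$ (defining $h_n=(1-\beta)^{1-n}\theta_{B_n}(1,2\sqrt{-1})$), whereas you first restrict to the support and use $|s|=|V|-i_0(s)$ there; both bookkeepings are valid.
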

\begin{proof}
From (b) of Proposition \ref{propbasictheta} and
$f_m(2\sqrt{-1})=(\sqrt{-1})^{m}(1-m)$, we have
\begin{align}
\theta_{B_n}(1,2\sqrt{-1})
=
\sum_{k=0}^{n}{n \choose k}
(-1)^{k}(1-2k)
=
\begin{cases}
1 
\quad \text{  if }n=0 \\
2
\quad \text{  if }n=1 \\
0
\quad \text{  if }n \geq 2.
\end{cases}  \nonumber
\end{align}
Theorem \ref{thmaltrep} gives
\begin{align*}
\omega_{G}(\beta)
&=
\sum_{s \subset E}
\prod_{n=0}
\theta_{B_n}(1,2\sqrt{-1})^{i_n(s)}
\beta^{|s|}
(1-\beta)^{|V|-|s|} \\
&=
\sum_{s \subset E}
\prod_{n=0}
[(1-\beta)^{1-n}
\theta_{B_n}(1,2\sqrt{-1})
]^{i_n(s)}
\beta^{|s|}.
\end{align*}
Then, the assertion is proved.
\end{proof}

\begin{ex}
\label{exampleomega}
$\\$
For a tree $T$, $\omega_{T}(\beta)=1-\beta$.
For the cycle graph $C_n$,
$\omega_{C_n}(\beta)=1+\beta^{n}$.
For the complete graph $K_4$,
$\omega_{K_4}(\beta)=1+2\beta+3\beta^{2}+8\beta^{3}+16\beta^{4}$.
For the graphs shown in Figure \ref{X1X2},
$\omega_{X_1}(\beta)=1+\beta+4\beta^{2}$ and
$\omega_{X_2}(\beta)=1+3\beta+4\beta^{2}$.
\end{ex}

We list the basic properties of $\omega$ below.
\begin{prop}$\quad$ 
\label{propbasicomega}
\begin{itemize}
\item[{\rm (a)}] \ \
$\omega_{G_1 \cup G_2}(\beta)=\omega_{G_1}(\beta)\omega_{G_2}(\beta). $
\item[{\rm (b)}] \ \
$\omega_{G}(\beta)=\omega_{G \backslash e}(\beta)+\beta
\omega_{G / e}(\beta )
\quad
\text{if } e \in E \text{ is not a loop.}$
\item[{\rm (c)}] \ \
$\omega_{B_n}(\beta)=1+(2n-1)\beta$.
\item[{\rm (d)}] \ \
$ \omega_{G}(\beta)=\omega_{\cg{G}}(\beta). $
\item[{\rm (e)}] \ \
$\omega_{G}(\beta)$ is a polynomial of degree $|V_{\cg{G}}|$.
The leading coefficient is $\prod_{i \in V_{\cg{G}}} (d_i -1)$
and the constant term is $1$.
\item[{\rm (f)}] \ \
Let $G^{(m)}$ be the graph obtained by subdividing each edge to m edges. Then,
 \begin{equation*}
 \omega_{G^{(m)}}(\beta)=(1+\beta+ \cdots + \beta^{m-1})^{|E|-|V|}\omega_{G}(\beta^m).
 \end{equation*}
\end{itemize}
\end{prop}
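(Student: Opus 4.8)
The plan is to obtain every part from the corresponding fact about $\theta$, keeping careful track of the exponent $|E|-|V|$ of the factor $(1-\beta)$ removed when passing from $\theta_G(\cdot,2\sqrt{-1})$ to $\omega_G$. For (a), $|E|-|V|$ is additive over disjoint unions, so $(1-\beta)^{|E|-|V|}$ factors accordingly; together with Proposition~\ref{propbasictheta}.(a) this gives $\omega_{G_1\cup G_2}=\omega_{G_1}\omega_{G_2}$. For (b), apply Theorem~\ref{thmcd} at $\gamma=2\sqrt{-1}$; since $e$ is not a loop, $|E(G\backslash e)|-|V(G\backslash e)|=(|E|-|V|)-1$ and $|E(G/e)|-|V(G/e)|=|E|-|V|$, so dividing $\theta_G=(1-\beta)\theta_{G\backslash e}+\beta\theta_{G/e}$ by $(1-\beta)^{|E|-|V|}$ turns the three terms into $\omega_G$, $\omega_{G\backslash e}$ and $\beta\,\omega_{G/e}$. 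For (c), I would specialize Proposition~\ref{propbasictheta}.(b) using $f_{2k}(2\sqrt{-1})=(-1)^k(1-2k)$ and $\sum_k k\binom nk x^k=nx(1+x)^{n-1}$ to obtain $\theta_{B_n}(\beta,2\sqrt{-1})=(1-\beta)^{n-1}\bigl(1+(2n-1)\beta\bigr)$, or read it straight off Theorem~\ref{thmomegaaltrep}: every subset of $B_n$ forms a single component whose nullity equals its size, so $\omega_{B_n}=\sum_k\binom nk\beta^k h_k(\beta)=(1-\beta)+2n\beta$. For (d), Proposition~\ref{propbasictheta}.(c) gives $\theta_G=\theta_{\cg{G}}$, and clipping a degree-one vertex deletes one vertex together with its unique incident edge, hence leaves $|E|-|V|$ unchanged, so the prefactors agree.

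For (e), use (d) to assume $G=\cg{G}$, i.e.\ $d_i\neq1$ for every $i$. The constant term is $\omega_G(0)=\theta_G(0,2\sqrt{-1})$, and at $\beta=0$ only $s=\emptyset$ survives in (\ref{defthetaeq}), contributing $\prod_i f_0(2\sqrt{-1})=1$. For the degree, use the expansion (\ref{theta2i}): the unique subset of size $|E|$ is $s=E$, whose summand is $(-\beta)^{|E|}\prod_{i\in V}(1-d_i)=(-1)^{|E|+|V|}\beta^{|E|}\prod_{i\in V}(d_i-1)$; no factor $d_i-1$ vanishes since $G$ is a coregraph, and every other subset has size $<|E|$, so $\theta_G(\beta,2\sqrt{-1})$ has degree $|E|$ with that leading coefficient. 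Comparing leading terms in $\theta_G(\beta,2\sqrt{-1})=(1-\beta)^{|E|-|V|}\omega_G(\beta)$ (Theorem~\ref{thmomega}) then gives $\deg\omega_G=|E|-(|E|-|V|)=|V|=|V_{\cg{G}}|$, while the sign factors $(-1)^{|E|+|V|}$ and $(-1)^{|E|-|V|}$ cancel, leaving leading coefficient $\prod_i(d_i-1)$.

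The only part with genuine content is (f), which I would reduce to the $\theta$-level identity
\[
\theta_{G^{(m)}}(\beta,\gamma)=\theta_G(\beta^m,\gamma).
\]
Recall that only sub-coregraphs contribute to (\ref{defthetaeq}). Every subdivision vertex of $G^{(m)}$ has degree $2$, so in a sub-coregraph its degree is $0$ or $2$; following the length-$m$ path that replaces an edge $e$ of $G$, this forces either all $m$ of its edges to lie in $s$ or none of them. Hence the sub-coregraphs $s$ of $G^{(m)}$ correspond bijectively to the sets $t\subseteq E(G)$ of ``fully used'' edges, the requirement that the original vertices of $G$ avoid degree $1$ being precisely the statement that $t$ is a sub-coregraph of $G$; moreover $|s|=m|t|$, $d_i(s)=d_i(t)$ for $i\in V(G)$, and each subdivision vertex contributes $f_0(\gamma)=f_2(\gamma)=1$. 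Thus the $s$-term equals $(\beta^m)^{|t|}\prod_{i\in V(G)}f_{d_i(t)}(\gamma)$, which establishes the identity (the cases $m=1$ and $e$ a loop are covered by the same argument). Since $|E(G^{(m)})|-|V(G^{(m)})|=m|E|-\bigl(|V|+(m-1)|E|\bigr)=|E|-|V|$, we conclude
\[
\omega_{G^{(m)}}(\beta)=\frac{\theta_G(\beta^m,2\sqrt{-1})}{(1-\beta)^{|E|-|V|}}=\left(\frac{1-\beta^m}{1-\beta}\right)^{|E|-|V|}\frac{\theta_G(\beta^m,2\sqrt{-1})}{(1-\beta^m)^{|E|-|V|}}=(1+\beta+\cdots+\beta^{m-1})^{|E|-|V|}\,\omega_G(\beta^m).
\]
The main obstacle is stating the path-propagation bijection in (f) cleanly, in particular the degree bookkeeping at the original vertices and the case of a subdivided loop; everything else is routine manipulation of the $(1-\beta)$-exponent.
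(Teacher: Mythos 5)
Your proposal is correct and follows essentially the same route as the paper: parts (a)--(e) are derived routinely from the corresponding facts about $\theta$ with the bookkeeping of the exponent $|E|-|V|$, and part (f) rests on exactly the two facts the paper cites, namely $|E_{G^{(m)}}|-|V_{G^{(m)}}|=|E|-|V|$ and $\theta_{G^{(m)}}(\beta,\gamma)=\theta_G(\beta^m,\gamma)$, which your path-propagation argument for sub-coregraphs establishes cleanly (and in slightly greater generality, for arbitrary $\gamma$).
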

\begin{proof}
Assertions (a$-$e) are easy.
(f) is proved by $|E_G|-|V_G|=|E_{G^{(m)}}|-|V_{G^{(m)}}|$ and
$\theta_{G^{(m)}}(\beta,2\sqrt{-1})
=\theta_{G}(\beta^{m},2\sqrt{-1})$.
\end{proof}

\begin{prop} 
\label{propomeganonneg}
If $G$ does not have connected components of nullity $0$, then
the coefficients of $\omega_{G}(\beta)$ are non-negative.
\end{prop}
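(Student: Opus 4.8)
The plan is to prove the statement by strong induction on the number of edges $|E(G)|$, using the deletion--contraction relation of Proposition~\ref{propbasicomega}(b) as the main tool, together with multiplicativity and the core reduction. Two preliminary simplifications come first. By Proposition~\ref{propbasicomega}(a) and the fact that a product of polynomials with non-negative coefficients again has non-negative coefficients, it suffices to treat a \emph{connected} graph $G$ with $n(G)\ge 1$. By Proposition~\ref{propbasicomega}(d) we have $\omega_G=\omega_{\cg{G}}$, and passing to the core preserves connectedness and nullity while not increasing $|E|$; so (applying the induction hypothesis to $\cg{G}$ whenever $\cg{G}\neq G$) I may further assume that $G$ is a coregraph, i.e.\ $d_i\ge 2$ for every vertex $i$.

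Next I would dispose of the cases in which there is no non-loop edge to contract. If $n(G)=1$ then, $G$ being connected with $|E|=|V|$ and all degrees at least $2$, every degree equals $2$, so $G$ is a cycle and $\omega_G(\beta)=1+\beta^{|V|}$ by Example~\ref{exampleomega}. If $n(G)\ge 2$ but $G$ has no non-loop edge, then $G=B_n$ with $n\ge 2$ and $\omega_{B_n}(\beta)=1+(2n-1)\beta$ by Proposition~\ref{propbasicomega}(c). In both cases all coefficients are non-negative.

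In the remaining case $n(G)\ge 2$ and $G$ has a non-loop edge $e=ij$, and I would apply $\omega_G=\omega_{G\backslash e}+\beta\,\omega_{G/e}$. Here $G/e$ is connected with $n(G/e)=n(G)\ge 1$ and has one fewer edge, so by the induction hypothesis $\omega_{G/e}$, and hence $\beta\,\omega_{G/e}$, has non-negative coefficients. For $G\backslash e$ there are two situations. If $e$ is not a bridge, then $G\backslash e$ is connected with $n(G\backslash e)=n(G)-1\ge 1$ and one fewer edge, and the induction hypothesis applies directly. If $e$ is a bridge, write $G\backslash e=G_1\cup G_2$ with $i\in G_1$ and $j\in G_2$; then in $G_1$ every vertex other than $i$ still has degree $\ge 2$ while $i$ has degree $d_i-1\ge 1$, so $2|E(G_1)|=\sum_v d_v(G_1)\ge 2(|V(G_1)|-1)+1$, which forces $|E(G_1)|\ge |V(G_1)|$ and therefore $n(G_1)=|E(G_1)|-|V(G_1)|+1\ge 1$; likewise $n(G_2)\ge 1$. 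Since $G_1$ and $G_2$ are connected, have positive nullity, and have strictly fewer edges, the induction hypothesis and multiplicativity give that $\omega_{G\backslash e}=\omega_{G_1}\omega_{G_2}$ has non-negative coefficients. Adding the two contributions completes the induction.

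The step that requires care -- the hard part -- is the bridge case: a priori, deleting a bridge from the coregraph $G$ could produce a tree component, which contributes the factor $h_0(\beta)=1-\beta$ of Theorem~\ref{thmomegaaltrep} with its negative coefficient. The degree-sum inequality above is exactly what excludes this, and it is precisely where the coregraph hypothesis enters. The same phenomenon explains why the hypothesis ``no connected component of nullity $0$'' cannot be dropped, since $\omega_T(\beta)=1-\beta$ for every tree $T$.
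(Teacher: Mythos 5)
Your proof is correct and follows essentially the same route as the paper: induction on $|E|$, reduction to a connected coregraph via multiplicativity and $\omega_G=\omega_{\cg{G}}$, the bouquet and cycle cases as terminal cases, and deletion--contraction otherwise. You additionally spell out the one delicate point the paper leaves implicit --- that when the deleted edge is a bridge of a coregraph, the degree count forces both resulting components to have nullity at least $1$, so no tree factor $1-\beta$ can appear --- which is a welcome clarification rather than a deviation.
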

\begin{proof}
We prove this assertion by induction on the number of edges.
Assume that every connected component is not a tree.
If $G$ has only one edge,
then $G=B_1$ and the coefficients are non-negative.
Let $G$ have $M (\geq 2)$ edges and assume that
the assertion holds for graphs with at most $M-1$ edges.
It is sufficient to consider the case in which $G$ is a connected coregraph
because of Proposition \ref{propbasicomega}.(a) and (d).
If all edges of $G$ are loops, $G=B_n$ for some $n \geq 2$ 
and the coefficients are non-negative.
If $G=C_M$, the coefficients are also non-negative, as in the case of Example
\ref{exampleomega}. 
Otherwise, we reduce $\omega_{G}$ 
to graphs with nullity not less than $1$
by applying the deletion-contraction relation 
and see that the coefficients of 
$\omega_{G \backslash e}$ and $\omega_{G/e}$ are both non-negative.
\end{proof}

\subsection{Relation to monomer-dimer partition
  function} \label{subsectionomegamonomerdimer}
In the next theorem, we prove that
the polynomial $\omega_{G}(\beta)$
is the monomer-dimer partition function with a specific form of weights.

A {\it matching} of $G$ is a set of edges such that no two edges occupy the same vertex.
It is also called a {\it dimer arrangement} in statistical
physics \cite{HLmonomerdimer}.
We use both terminologies.
The number of edges in a matching ${\bf D}$ is denoted by $|{\bf D}|$.
If a matching ${\bf D}$ consists of $k$ edges, then it is called a {\it k-matching}. 
The vertices covered by the edges in ${\bf D}$ are denoted by
$[{\bf D}]$. 
The set of all matchings of $G$ is denoted by $\mathcal{D}$.

The monomer-dimer partition function with edge weights 
$\bs{\mu}=(\mu_{e})_{e \in E}$ and vertex weights 
$\bs{\lambda}=(\lambda_{i})_{i \in V}$
is defined as
\begin{equation}
\Xi_G(\bs{\mu},\bs{\lambda})
:=
\sum_{{\bf D} \in \mathcal{D}}
\prod_{e \in {\bf D}} \mu_{e}
\prod_{i \in V \backslash [{\bf D}]}  \lambda_{i}.  \nonumber
\end{equation} 
We write $\Xi_G(\mu,\bs{\lambda})$ if all weights $\mu_{e}$ are set
to the same $\mu$.

\begin{thm} 
\label{thmmonomer}
Let $\lambda_{i}:=1+(d_i - 1) \beta$; then,
\begin{equation}
\omega_{G}(\beta)
=
\Xi_{G}(- \beta, \bs{\lambda}).  \nonumber
\end{equation}
\end{thm}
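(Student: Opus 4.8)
The plan is to show that both sides satisfy the same deletion-contraction recursion and agree on a boundary class of graphs, and then invoke the reduction to bouquet graphs exactly as in the proof of Theorem \ref{cortheta}. Since $\omega_G$ is a V-function (Proposition \ref{propbasicomega}(b) together with multiplicativity), it suffices to check that $\Xi_G(-\beta,\bs\lambda)$ with $\lambda_i = 1+(d_i-1)\beta$ obeys the same recursion and the same values at bouquet graphs; but a subtlety is that the vertex weights $\lambda_i$ depend on the degrees $d_i$ in the \emph{ambient} graph, so deletion/contraction changes them. I would therefore work with the more flexible quantity $\Xi_G(-\beta,\bs\lambda)$ for \emph{arbitrary} vertex weights $\bs\lambda$ and track how the weights transform.

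First I would record the monomer-dimer deletion-contraction identity for a non-loop edge $e=ij$: classifying matchings by whether they use $e$,
\begin{equation}
\Xi_G(\bs\mu,\bs\lambda)
= \Xi_{G\backslash e}(\bs\mu,\bs\lambda)
+ \mu_e\,\Xi_{G/e}(\bs\mu,\bs\lambda''),\nonumber
\end{equation}
where on $G/e$ the fused vertex receives weight $1$ (it is always covered when $e$ is in the matching) and all other weights are inherited. Specializing $\mu_e=-\beta$, this is structurally the recursion of Proposition \ref{propbasicomega}(b), but I must reconcile the weight bookkeeping: on the right-hand side the copies of $\omega$ carry $\lambda_i = 1+(d_i^{G\backslash e}-1)\beta$ and $\lambda_i = 1+(d_i^{G/e}-1)\beta$, i.e. the \emph{correct} ambient-degree weights for $G\backslash e$ and $G/e$ respectively, except that $G\backslash e$ sees the endpoints $i,j$ with degree lowered by one. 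The cleanest route is to introduce $F_G(\beta) := \Xi_G(-\beta,\bs\lambda)$ with $\lambda_i=1+(d_i-1)\beta$ fixed once and for all by the ambient $G$, prove directly from the matching-sum that $F$ is multiplicative over disjoint unions and satisfies
\begin{equation}
F_G(\beta) = F_{G\backslash e}(\beta) + \beta\,F_{G/e}(\beta)\nonumber
\end{equation}
for every non-loop $e$ — this is where the specific choice $\lambda_i = 1+(d_i-1)\beta$ is forced, since the degree drop at $i$ and $j$ in $G\backslash e$ must be exactly compensated — and then compare with $\omega$ on bouquets.

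For the boundary check: a bouquet $B_n$ has a single vertex of degree $2n$, no non-loop edges, and its only matching is the empty one, so $\Xi_{B_n}(-\beta,\bs\lambda) = \lambda = 1+(2n-1)\beta = \omega_{B_n}(\beta)$ by Proposition \ref{propbasicomega}(c). Similarly on the empty graph $B_0$ both sides equal $1$. Since both $\omega$ and $F$ are multiplicative V-functions agreeing at all $B_n$, and every graph reduces under deletion-contraction of non-loop edges to a disjoint union of bouquets, they coincide.

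The main obstacle I anticipate is the weight bookkeeping in the deletion-contraction step: one has to verify carefully that with $\lambda_i = 1+(d_i-1)\beta$, the identity $\lambda_i = (1-\beta)\lambda_i^{(G\backslash e)} + \beta\cdot 1 \cdot(\text{contribution when }e\text{ used})$ works out at the two endpoints simultaneously, i.e. that splitting each endpoint's weight $1+(d_i-1)\beta = [1+(d_i-2)\beta] + \beta$ correctly distributes matchings not covering $i$ versus the new matchings created by contraction. Concretely, a matching of $G$ not using $e$ is a matching of $G\backslash e$, but its weight uses $d_i^G$ rather than $d_i^{G\backslash e}=d_i^G-1$ at an uncovered endpoint; the excess factor $\beta$ per uncovered endpoint of $e$ must be absorbed into the $\beta\,F_{G/e}$ term. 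Making this matching-by-matching accounting precise — and handling the case where $i=j$ would be a loop (excluded) or where $e$ is a bridge — is the crux; everything else is the routine V-function reduction already used for Theorem \ref{cortheta}.
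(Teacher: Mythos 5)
Your proposal follows essentially the same route as the paper: verify the boundary values at bouquets ($\Xi_{B_n}(-\beta,\bs{\lambda})=1+(2n-1)\beta=\omega_{B_n}(\beta)$) and establish the deletion-contraction relation for $\Xi_{G}(-\beta,\bs{\lambda})$ by splitting each endpoint weight as $1+(d_i-1)\beta=[1+(d_i-2)\beta]+\beta$ and absorbing the excess $\beta$'s into the contraction term, which is exactly the five-case matching bookkeeping the paper carries out. One caution: your preliminary identity $\Xi_G=\Xi_{G\backslash e}+\mu_e\,\Xi_{G/e}$ with the fused vertex weighted $1$ is not correct as stated (the $e$-using matchings give $\mu_e\,\Xi_{G\setminus\{i_0,j_0\}}$, not $\mu_e\,\Xi_{G/e}$), but since you discard it in favour of the direct accounting with ambient-degree weights, the argument you actually propose coincides with the paper's.
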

\begin{proof}
We show that $\Xi_{G}(- \beta, \bs{\lambda})$ satisfies 
the deletion-contraction relation and the boundary condition of the form in 
Proposition \ref{propbasicomega}.(c).
For the bouquet graph $B_n$, ${\bf D}= \phi$ is the only possible
dimer arrangement, and thus,
\begin{equation}
\Xi_{B_n}(- \beta, \bs{\lambda})=1+(2 n-1)\beta =\omega_{B_n}(\beta). \nonumber
\end{equation}
For a non-loop edge $e=i_0j_0$, we show that the deletion-contraction
 relation is satisfied.
A dimer arrangement ${\bf D} \in \mathcal{D}$ is classified into the following
 five types: 
(a) ${\bf D}$ includes $e$, 
(b) ${\bf D}$ does not include $e$ and ${\bf D}$ covers both $i_0$ and $j_0$, 
(c) ${\bf D}$ covers $i_0$ but not $j_0$, 
(d) ${\bf D}$ covers $j_0$ but not $i_0$,
and 
(e) ${\bf D}$ covers neither $i_0$ nor $j_0$.  
According to this classification, $\Xi_{G}(- \beta, \bs{\lambda})$ is a
 sum of the five terms $A,B,C,D$, and $E$.
We see that
\begin{align*}
C
=
\sum_{{\bf D} \in \mathcal{D} \atop [{\bf D}] \ni i_0, [{\bf D}] \not\ni j_0}
&(- \beta)^{|{\bf D}|}
\prod_{i \in V \backslash [{\bf D}]}
\lambda_{i}
\\
=
\sum_{{\bf D} \in \mathcal{D} \atop [{\bf D}] \ni i_0, [{\bf D}] \not\ni j_0}
&(- \beta)^{|{\bf D}|}
(1+(d_{j_0}-2)\beta)
\prod_{i \in V \backslash [{\bf D}] \atop i \neq j_0}
\lambda_{i}  \nonumber \\
&+
\beta
\sum_{{\bf D} \in \mathcal{D} \atop [{\bf D}] \ni i_0, [{\bf D}] \not\ni j_0}
(- \beta)^{|{\bf D}|}
\prod_{i \in V \backslash [{\bf D}] \atop i \neq j_0}
\lambda_{i}   \\
=:C_1 + \beta C_2.
\end{align*}
In the same manner, $D=D_1 + \beta D_2$.
Similarly,
\begin{align*}
E
&=
\sum_{{\bf D} \in \mathcal{D} \atop [{\bf D}] \not\ni i_0, [{\bf D}] \not \ni j_0}
(- \beta)^{|{\bf D}|}
\lambda_{i_0}\lambda_{j_0}
\prod_{i \in V \backslash [{\bf D}] \atop i \neq i_0,j_0}
\lambda_{i} \\
&=
\sum_{{\bf D} \in \mathcal{D} \atop [{\bf D}] \not\ni i_0, [{\bf D}] \not \ni j_0}
(- \beta)^{|{\bf D}|}
(1+(d_{i_0}-2)\beta)(1+(d_{j_0}-2)\beta)
\prod_{i \in V \backslash [{\bf D}] \atop i \neq i_0,j_0}
\lambda_{i} \nonumber \\
&+
\beta
\sum_{{\bf D} \in \mathcal{D} \atop [{\bf D}] \not\ni i_0, [{\bf D}] \not \ni j_0}
(- \beta)^{|{\bf D}|}
(2+(d_{i_0}+d_{j_0}-3)\beta) 
\prod_{i \in V \backslash [{\bf D}] \atop i \neq i_0,j_0}
\lambda_{i} \\
&=:E_1 + \beta E_2.
\end{align*}
We can straightforwardly check that
\begin{equation} 
\Xi_{G \backslash  e}(- \beta, \bs{\lambda '})
=
B+C_1+D_1+E_1  \nonumber
\end{equation}
and
\begin{equation} 
\beta \Xi_{G / e }(- \beta, \bs{\lambda ''}) 
=
A+\beta C_2 +\beta D_2 + \beta E_2, \label{thmmonomereq10}
\end{equation}
where $\bs{\lambda'}$ and $\bs{\lambda''}$ are defined by the degrees of 
$G \backslash e$ and $G / e$, respectively.
Note that $C_2+D_2$ in Eq.~(\ref{thmmonomereq10}) corresponds to dimer
 arrangements in $G / e$ that cover the new vertex formed by the contraction.
This shows the deletion-contraction relation.
\end{proof}

Let $p_{G}(k)$ be the number of k-matchings of $G$. 
The {\it matching polynomial} $\alpha_{G}$ is defined by
\begin{equation}
\alpha_{G}(x)= \sum_{k=0}^{\lfloor \frac{|V|}{2} \rfloor}(-1)^{k}
p_{G}(k) x^{|V|-2k}.  \nonumber
\end{equation}
The matching polynomial is essentially the monomer-dimer
partition function with uniform weights;
if we set all vertex weights as $\lambda$ and all edge weights as $\mu$, 
we have
\begin{equation}
\Xi_{G}(\mu, \lambda)=
\alpha_{G}\Big( \frac{\lambda}{\sqrt{- \mu}} \Big)
{\sqrt{- \mu}}^{|V|}. \nonumber
\end{equation}
Therefore, for a $(q+1)$-regular graph $G$, 
Theorem \ref{thmmonomer} implies
\begin{equation}
\omega_{G}(u^{2})
=
\alpha_{G}\Big(
\frac{1}{u} +q u \Big)
u^{|V|}.\label{cormatchingeq1}
\end{equation}
In \cite{Nseries}, Nagle derives a sub-coregraph expansion of the
monomer-dimer partition function with uniform weights,
or matching polynomials, 
on regular graphs.
With a transformation of the variables, 
his expansion theorem
is essentially equivalent to
Eq.~(\ref{cormatchingeq1});
Theorem \ref{thmmonomer} gives an extension of the
expansion to non-regular graphs.

As an immediate consequence of Eq.~(\ref{cormatchingeq1}),
we remark on the
symmetry of the coefficients of 
$\omega_{G}$ for regular graphs.
\begin{cor}
Let $G$ be a $(q+1)-$regular graph $(q \geq 1)$ with $N$ vertices and
$w_k$ be the $k$-th coefficient of $\omega_{G}(\beta)$.
Then, we have 
\begin{equation}
\qquad \qquad
w_{N-k}=w_{k} q^{N-2k}
\quad \quad
\text{ for }  0 \leq k \leq N.  \nonumber
\end{equation}
\end{cor}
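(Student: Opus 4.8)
The plan is to derive the corollary directly from the matching-polynomial identity in Eq.~(\ref{cormatchingeq1}), using the well-known symmetry of the matching polynomial together with a change of variables. First I would recall that for any graph $H$ with $N$ vertices the matching polynomial $\alpha_{H}(x)=\sum_{k}(-1)^{k}p_{H}(k)x^{N-2k}$ has only terms of a single parity in the exponent, so it satisfies $\alpha_{H}(x)=x^{N}\alpha_{H}^{\ast}(x^{-2})$ for the polynomial $\alpha_{H}^{\ast}(t):=\sum_{k}(-1)^{k}p_{H}(k)t^{k}$; this is just the statement that the matching generating polynomial is recovered from $\alpha_H$. This elementary reshuffling is what encodes the ``palindromic up to a power'' behaviour we want.

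Next I would specialize Eq.~(\ref{cormatchingeq1}) for the $(q+1)$-regular graph $G$: writing $\beta=u^{2}$, we have $\omega_{G}(u^{2})=\alpha_{G}(u^{-1}+qu)\,u^{N}$. The plan is to substitute $u\mapsto 1/(u\sqrt{q})$ (equivalently $\beta\mapsto 1/(q\beta)$) and observe that the argument $u^{-1}+qu$ is \emph{invariant} under $u\mapsto 1/(qu)$, since $1/(qu)^{-1}+q/(qu)=qu+u^{-1}$. Hence $\alpha_{G}$ evaluated at that point is unchanged, and the only change comes from the prefactor $u^{N}$, which picks up a factor $(qu^{2})^{-N}=(q\beta)^{-N}$. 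Carrying this through gives a functional equation of the shape $\omega_{G}(1/(q\beta)) = (q\beta)^{-N}\,\omega_{G}(\beta)$, i.e. $\beta^{N}\omega_{G}(1/(q\beta)) = q^{-N}\omega_{G}(\beta)$, after writing things over a common power of $\beta$.

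Finally I would extract the coefficient statement. Writing $\omega_{G}(\beta)=\sum_{k=0}^{N}w_{k}\beta^{k}$ (degree $N$ by Proposition~\ref{propbasicomega}.(e), since $G$ is $(q+1)$-regular with $q\ge1$ so every core vertex has degree $\ge2$ and $\cg{G}=G$), the left side $\beta^{N}\omega_{G}(1/(q\beta))=\sum_{k}w_{k}q^{-k}\beta^{N-k}$. Comparing the coefficient of $\beta^{N-k}$ on both sides of $\beta^{N}\omega_{G}(1/(q\beta))=q^{-N}\omega_{G}(\beta)$ yields $w_{k}q^{-k}=q^{-N}w_{N-k}$, which rearranges to $w_{N-k}=w_{k}q^{N-2k}$, exactly the claimed identity for $0\le k\le N$.

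I do not anticipate a genuine obstacle here: the content is entirely in Eq.~(\ref{cormatchingeq1}) and the parity structure of $\alpha_{G}$, both already available. The one point requiring a little care is bookkeeping with the substitution $u\mapsto 1/(u\sqrt{q})$ and the half-integer powers of $q$ and $u$ — one should either work throughout with $\beta=u^{2}$ to stay within polynomials, or note that the odd powers cancel because $\alpha_{G}(x)x^{N}$, as a function of $\beta=u^2$, is manifestly a polynomial in $\beta$ (which is why $\omega_{G}$ is well defined). Making that cancellation explicit before substituting avoids any spurious $\sqrt{q}$ or $\sqrt{\beta}$ in the final relation.
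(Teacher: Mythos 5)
Your overall strategy is the intended one: the corollary is stated in the paper as an immediate consequence of Eq.~(\ref{cormatchingeq1}) (the paper gives no separate proof), and exploiting the invariance of the argument $u^{-1}+qu$ under an inversion of $u$ is exactly the right mechanism. However, there is a concrete computational error in the middle of your argument. The substitution that fixes $u^{-1}+qu$ is $u\mapsto 1/(qu)$ (as your own verification $(1/(qu))^{-1}+q\cdot(1/(qu))=qu+u^{-1}$ shows), and under this map $\beta=u^{2}$ goes to $1/(q^{2}\beta)$, \emph{not} to $1/(q\beta)$. The map $u\mapsto 1/(u\sqrt{q})$ that you name, which does send $\beta$ to $1/(q\beta)$, does not preserve $u^{-1}+qu$: it sends it to $\sqrt{q}\,(u+u^{-1})$. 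Consequently the functional equation you write down, $\beta^{N}\omega_{G}(1/(q\beta))=q^{-N}\omega_{G}(\beta)$, is false; for $K_{4}$ (so $q=2$, $N=4$, $\omega_{K_4}(\beta)=1+2\beta+3\beta^{2}+8\beta^{3}+16\beta^{4}$) the left side has constant term $1$ while the right side has constant term $1/16$. A second algebra slip at the end (from $w_{k}q^{-k}=q^{-N}w_{N-k}$ one gets $w_{N-k}=w_{k}q^{N-k}$, not $q^{N-2k}$) happens to land you back on the correct final statement, which masks the first error.

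The fix is one line: use $u\mapsto 1/(qu)$ throughout, so that the correct functional equation reads
\begin{equation}
\beta^{N}\,\omega_{G}\!\left(\frac{1}{q^{2}\beta}\right)=q^{-N}\,\omega_{G}(\beta). \nonumber
\end{equation}
Expanding the left side as $\sum_{k}w_{k}q^{-2k}\beta^{N-k}$ and comparing the coefficient of $\beta^{N-k}$ gives $w_{k}q^{-2k}=q^{-N}w_{N-k}$, which correctly rearranges to $w_{N-k}=w_{k}q^{N-2k}$. With this correction the rest of your argument (the use of Proposition~\ref{propbasicomega}.(e) for the degree, and the remark that everything stays polynomial in $\beta=u^{2}$) is fine.
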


\subsection{Zeros of $\omega_{G}(\beta)$}
Physicists are interested in
the complex zeros of partition functions, 
because it restricts the occurrence of phase transitions,
i.e., discontinuity of physical quantities with respect to parameters
such as temperature.
In the limit of infinite size of graphs,
the analyticity of the scaled log partition function 
on a complex domain
is guaranteed 
if there exist no zeros in the domain and
some additional conditions hold.
(See \cite{YLstat1,Sbounds}.)
For the monomer-dimer partition
function, 
Heilman and Lieb \cite{HLmonomerdimer}
show the following result.
\begin{thm}
[\cite{HLmonomerdimer} Theorem 4.6] \label{thmmonomerdimerzero}
If $\mu_{e} \geq 0$ for all $e \in E$ and
${\rm Re}(\lambda_{j}) > 0 $  for all $j \in V$,
then
$\Xi_G(\bs{\mu},\bs{\lambda}) \neq 0$.
The same statement is true if 
${\rm Re}(\lambda_{j}) < 0 $  for all $j \in V$.
\end{thm}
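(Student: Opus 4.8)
The plan is to argue by induction on the number of vertices, using a vertex-deletion recursion rather than the edge deletion-contraction relation employed elsewhere in the paper. Fix a vertex $v$ of $G$; every matching of $G$ either misses $v$ or covers $v$ through exactly one (necessarily non-loop) edge $e=vw$, so splitting the sum defining $\Xi_G$ accordingly gives
\[
\Xi_{G}(\bs{\mu},\bs{\lambda})
= \lambda_v\,\Xi_{G-v}(\bs{\mu},\bs{\lambda})
+ \sum_{e=vw} \mu_e\,\Xi_{G-v-w}(\bs{\mu},\bs{\lambda}),
\]
where the sum ranges over the non-loop edges incident to $v$ and the weights on $G-v$ and $G-v-w$ are the obvious restrictions of $\bs{\mu},\bs{\lambda}$.

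The crucial point, which makes the induction close, is to strengthen the statement. I would prove simultaneously that, whenever $\mu_e\ge 0$ for all $e\in E$ and ${\rm Re}(\lambda_i)>0$ for all $i\in V$, one has (i) $\Xi_G\neq 0$, and (ii) ${\rm Re}(\Xi_G/\Xi_{G-v})>0$ for every vertex $v$. The base case $|V|=0$ is the empty product $\Xi=1$. For the inductive step, $G-v$ and $G-v-w$ have fewer vertices and inherit the hypotheses, so by (i) their partition functions are nonzero, and dividing the recursion by $\Xi_{G-v}$ yields
\[
\frac{\Xi_G}{\Xi_{G-v}} = \lambda_v + \sum_{e=vw} \mu_e\,\frac{\Xi_{G-v-w}}{\Xi_{G-v}} .
\]
Applying (ii) to the graph $G-v$ at its vertex $w$ gives ${\rm Re}(\Xi_{G-v}/\Xi_{G-v-w})>0$, and since ${\rm Re}(1/z)={\rm Re}(z)/|z|^{2}$, the reciprocal $\Xi_{G-v-w}/\Xi_{G-v}$ also has positive real part. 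With $\mu_e\ge 0$ and ${\rm Re}(\lambda_v)>0$, the right-hand side above has strictly positive real part; this establishes (ii) for $G$, and since $\Xi_{G-v}\neq 0$ it forces $\Xi_G\neq 0$, i.e.\ (i).

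The second assertion follows from the first by a parity symmetry: a matching $D$ leaves $|V|-2|D|$ vertices uncovered, so replacing $\bs{\lambda}$ by $-\bs{\lambda}$ multiplies every term of $\Xi_G$ by $(-1)^{|V|}$, that is, $\Xi_G(\bs{\mu},-\bs{\lambda})=(-1)^{|V|}\Xi_G(\bs{\mu},\bs{\lambda})$; applying the first part with $-\bs{\lambda}$, whose entries have positive real part, gives the claim when ${\rm Re}(\lambda_j)<0$. The only genuine obstacle is identifying the right inductive statement: the bare nonvanishing claim does not propagate through the recursion, and one must carry along the half-plane information on the ratios $\Xi_G/\Xi_{G-v}$ — the continued-fraction-type positivity underlying Heilmann and Lieb's original proof. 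Once this is set up, the induction reduces to the elementary fact that the open right half-plane is closed under reciprocation.
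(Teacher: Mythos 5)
Your proof is correct. Note, however, that the paper does not prove this statement at all: it is imported verbatim as Theorem 4.6 of Heilmann and Lieb \cite{HLmonomerdimer}, so there is no internal argument to compare against. What you have written is essentially the original Heilmann--Lieb induction: the vertex-deletion recursion $\Xi_{G}=\lambda_v\Xi_{G-v}+\sum_{e=vw}\mu_e\Xi_{G-v-w}$, the strengthened inductive hypothesis ${\rm Re}(\Xi_G/\Xi_{G-v})>0$, and the closure of the open right half-plane under reciprocation; the reduction of the ${\rm Re}(\lambda_j)<0$ case via $\Xi_G(\bs{\mu},-\bs{\lambda})=(-1)^{|V|}\Xi_G(\bs{\mu},\bs{\lambda})$ is also sound. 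You correctly identified that the bare nonvanishing claim does not close under the recursion and that carrying the half-plane condition on the ratios is the essential idea.
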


Because our polynomial $\omega_{G}(\beta)$  
is a monomer-dimer partition function,
we obtain a bound of the region of complex zeros.

\begin{cor}
Let $G$ be a graph and
let $d_m$ and $d_M$ be the minimum and maximum degree in
$\cg{G}$, respectively,
and assume that $d_m \geq 2$.
If $\beta \in \mathbb{C}$ satisfies $\omega_{G}(\beta)=0$,
then
\begin{equation}
\frac{1}{d_M -1} \leq |\beta| \leq \frac{1}{d_m -1}.  \nonumber
\end{equation}
\end{cor}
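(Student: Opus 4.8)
The plan is to deduce this corollary directly from Theorem \ref{thmmonomer} together with Theorem \ref{thmmonomerdimerzero}. First I would reduce to the core: by Proposition \ref{propbasicomega}.(d) we have $\omega_G(\beta) = \omega_{\cg{G}}(\beta)$, so any zero of $\omega_G$ is a zero of $\omega_{\cg{G}}$, and the degrees appearing in Theorem \ref{thmmonomer} should be those of $\cg{G}$. Applying Theorem \ref{thmmonomer} to $\cg{G}$ with $\lambda_i := 1 + (d_i-1)\beta$ and uniform edge weight $\mu = -\beta$, a zero $\beta$ of $\omega_G$ is a zero of $\Xi_{\cg{G}}(-\beta, \bs{\lambda})$.

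Next I would invoke the contrapositive of Theorem \ref{thmmonomerdimerzero}. For $\Xi_{\cg{G}}(-\beta, \bs{\lambda})$ to vanish, the hypotheses of that theorem must fail. Suppose $|\beta| < \frac{1}{d_M - 1}$. Then for every vertex $i$ of $\cg{G}$ we have $|(d_i - 1)\beta| \le (d_M - 1)|\beta| < 1$, so $\mathrm{Re}(\lambda_i) = 1 + (d_i-1)\mathrm{Re}(\beta) \ge 1 - |(d_i-1)\beta| > 0$. To apply Theorem \ref{thmmonomerdimerzero} we also need $\mu_e = -\beta \ge 0$; this is where a little care is needed, since $\beta$ is complex. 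The way around this is the standard trick for monomer-dimer zeros: write $\Xi_{\cg{G}}(-\beta,\bs{\lambda})$ as a polynomial in $-\beta$ whose coefficients are the matching generating data, and note that Heilman--Lieb's theorem is really a statement about the location of zeros in the edge-weight variable; more directly, one applies the theorem with the substitution that makes the edge weights a fixed nonnegative real. Actually the cleanest route is: for fixed complex $\beta$, the quantity $\Xi_G(-\beta,\bs\lambda)$ with $\lambda_i$ as above is, up to the scaling relating $\Xi$ to the matching polynomial, a value of the matching polynomial, and one uses the form of Theorem \ref{thmmonomerdimerzero} that only constrains $\mathrm{Re}(\lambda_j)$ once the edge weights are held at a nonnegative value — so one treats $|\beta|$ as the edge weight magnitude and absorbs the phase into the $\lambda_i$. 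I would argue that if $\omega_{\cg{G}}(\beta) = 0$ with $\mathrm{Re}(\lambda_i)$ all of the same (nonzero) sign and the edge weight nonnegative, Theorem \ref{thmmonomerdimerzero} is contradicted; hence $|\beta| \ge \frac{1}{d_M-1}$.

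For the upper bound $|\beta| \le \frac{1}{d_m - 1}$ I would use the functional symmetry already implicit in Proposition \ref{propbasicomega}.(e): $\omega_G$ has degree $|V_{\cg{G}}|$ with constant term $1$ and leading coefficient $\prod_{i \in V_{\cg{G}}}(d_i - 1)$, which is nonzero since $d_m \ge 2$. Thus $\beta = 0$ is not a zero, and the reciprocal polynomial $\beta^{|V_{\cg{G}}|}\omega_G(1/\beta)$ has its zeros at the reciprocals of the zeros of $\omega_G$. I would identify this reciprocal polynomial, via Theorem \ref{thmmonomer}, with another monomer-dimer partition function whose vertex weights have real part controlled by $d_m$, and apply Theorem \ref{thmmonomerdimerzero} again: if $1/|\beta| < d_m - 1$ then the reciprocal-side vertex weights all have positive real part and the edge weights are nonnegative, forcing non-vanishing, hence $1/|\beta| \ge d_m - 1$, i.e. $|\beta| \le \frac{1}{d_m - 1}$.

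The main obstacle I anticipate is the bookkeeping that makes Theorem \ref{thmmonomerdimerzero} applicable despite $\beta$ being complex: one must arrange the application so that the edge weights fed to Heilman--Lieb are genuinely nonnegative reals while all the $\beta$-dependence sits in the $\lambda_i$, and one must verify that the reciprocal-polynomial manipulation for the upper bound indeed yields a bona fide monomer-dimer partition function with the right vertex-weight real parts (this likely uses the regular-graph identity \eqref{cormatchingeq1} or a direct degree-reversal computation on $\Xi$). Once that identification is set up correctly, both inequalities fall out as immediate contrapositives, and the hypothesis $d_m \ge 2$ is exactly what is needed to keep the leading coefficient nonzero and the bounds finite.
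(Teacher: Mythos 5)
Your overall strategy---reduce to the core, write $\omega_G(\beta)=\Xi_G(-\beta,\bs{\lambda})$ with $\lambda_i=1+(d_i-1)\beta$, and argue by contrapositive from Theorem \ref{thmmonomerdimerzero}---is the paper's strategy, but the step you yourself flag as ``the main obstacle'' is the actual content of the proof, and you leave it unresolved. The resolution is the homogeneity of the monomer--dimer sum: a matching ${\bf D}$ leaves $|V|-2|{\bf D}|$ vertices uncovered, so $\Xi_G(c^{2}\mu,c\bs{\lambda})=c^{|V|}\Xi_G(\mu,\bs{\lambda})$. Writing $\beta=|\beta|{\rm e}^{\img\theta}$ and taking $c=\img\,{\rm e}^{-\img\theta/2}$ turns the edge weight $-\beta$ into the genuinely nonnegative real $|\beta|$, and a short computation gives ${\rm Re}(\img\,{\rm e}^{-\img\theta/2}\lambda_j)=(1-(d_j-1)|\beta|)\sin\frac{\theta}{2}$. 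Your computation ${\rm Re}(\lambda_i)=1+(d_i-1){\rm Re}(\beta)>0$ concerns the unrotated weights and is not the relevant quantity; after the rotation the sign condition becomes $(d_j-1)|\beta|\lessgtr 1$, which is exactly what produces the annulus. Moreover one must rule out $\theta=0$, since otherwise $\sin\frac{\theta}{2}=0$ and every rotated real part vanishes, so neither sign hypothesis of Theorem \ref{thmmonomerdimerzero} can hold; this is where Proposition \ref{propomeganonneg} enters (nonnegative coefficients and constant term $1$ give $\omega_G(\beta)\ge 1$ for real $\beta\ge 0$, and the hypothesis $d_m\ge 2$ forces every component of $\cg{G}$ to have nullity at least $1$, so that proposition applies). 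None of this appears in your sketch.

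On the upper bound: your reciprocal-polynomial detour is unnecessary and, as written, circular, because it still needs the very same rotation step to make the edge weights nonnegative. Theorem \ref{thmmonomerdimerzero} also forbids zeros when all ${\rm Re}(\lambda_j)<0$, and if $|\beta|>1/(d_m-1)$ then every rotated real part $(1-(d_j-1)|\beta|)\sin\frac{\theta}{2}$ is negative; hence both inequalities fall out of the single rotated identity $\Xi_G(-\beta,\bs{\lambda})=(\img\,{\rm e}^{-\img\theta/2})^{-|V|}\,\Xi_G(|\beta|,\img\,{\rm e}^{-\img\theta/2}\bs{\lambda})$. Your alternative can be repaired---for a coregraph one finds $\beta^{|V|}\omega_G(1/\beta)=\Xi_G\bigl(-\beta,((d_i-1)+\beta)_{i\in V}\bigr)$, whose rotated vertex weights have real part $((d_i-1)-|\beta|)\sin\frac{\theta}{2}$---but it buys nothing over using the negative-real-part half of Heilmann--Lieb directly.
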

\begin{proof}
Without loss of generality, we assume that $G$ is a coregraph.
Let  $\beta = |\beta| {\rm e}^{\img \theta}$ satisfy
$\omega_{G}(\beta)=0$,
where
$0 \leq \theta < 2 \pi$ and $\img$ is the imaginary unit.
Because $\omega_{G}(0)=1$ and the coefficients of $\omega_{G}(\beta)$ are
not negative from Proposition \ref{propomeganonneg},
we have $\beta \neq 0$ and $\theta \neq 0$.
We see that
\begin{equation}
\omega_{G}(\beta)
=
\Xi_G(-\beta,\bs{\lambda}) 
=
\Xi_G(|\beta|,\img {\rm e}^{-\img \theta /2} \bs{\lambda})
(\img{\rm e}^{-\img \theta /2})^{-|V|},  \nonumber
\end{equation}
where $\lambda_j=1+(d_j-1)\beta$
and ${\rm Re}(\img {\rm e}^{-\img \theta /2}\lambda_{j})$
$=(1-(d_j-1)|\beta|) \sin\frac{\theta}{2}$.
From Theorem \ref{thmmonomerdimerzero},
the assertion follows.
\end{proof}
In particular, if the graph is a $(q+1)$-regular graph,
the roots lie on a circle of radius $1/q$,
which is also directly seen by Eq.~(\ref{cormatchingeq1})
by combining the well-known result on the roots of matching polynomials
\cite{HLmonomerdimer}:
the zeros of matching polynomials are on the real interval
$(-2\sqrt{q},2 \sqrt{q})$.

\subsection{Determinant sum formula}
Let 
$\mathcal{T}:=\{C \subset E ; d_{i}(C)=0 \text{ or } 2 \text{ for all }i \in V \}$
be the set of unions of vertex-disjoint cycles.
In this subsection,
an element $C \in \mathcal{T}$ 
is identified with the subgraph $(V_{C},C)$,
where $V_{C}:=\{i \in V ; d_i(C) \neq 0\}$.
A graph $G \setminus C$ is given by deleting all the vertices in $V_{C}$
and the edges of $G$ that are incident with them.

In this subsection, we aim to prove Theorem \ref{coromega},
in which we represent $\omega_{G}$ as a sum of determinants.
This theorem is similar to the expansion of the matching polynomial
by characteristic polynomials \cite{GGmatching}:
\begin{equation}
\alpha_{G}
(x)
=
\sum_{C \in \mathcal{T}}
2^{k(C)}
\det[x I - A_{G \setminus C }], \label{expansionmatching}
\end{equation}
where $A_{G \setminus C }$ is the adjacency matrix of $G \setminus C$
and
$k(C)$, the number of connected components of $C$.

\begin{thm}
\label{coromega}
\begin{equation}
\omega_{G}
(u^{2})
=
\sum_{C \in \mathcal{T}}
2^{k(C)}
\det
\Big(
[
I
-u {A_{G}}
+
u^{2}(D_{G}-I)
]
\Big|_{G\setminus C}
\Big)
u^{|C|}, \label{expansionomega}
\end{equation}
where
$D_G$ is the degree matrix defined by 
$(D_G)_{i,j}:= d_i \delta_{i,j} $ and
$\cdot \big|_{G \setminus C}$ denotes the restriction
to the principal minor indexed by the vertices of $G \setminus C$.
\end{thm}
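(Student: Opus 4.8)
The plan is to reduce \eqref{expansionomega} to the matching-polynomial expansion \eqref{expansionmatching} via the monomer-dimer interpretation of $\omega_G$ given in Theorem \ref{thmmonomer}, but with non-uniform vertex weights. The starting point is the identity $\omega_{G}(\beta)=\Xi_{G}(-\beta,\bs{\lambda})$ with $\lambda_i=1+(d_i-1)\beta$. Setting $\beta=u^{2}$, expand $\Xi_G$ as a sum over matchings $\bf D$: each $k$-matching contributes $(-u^{2})^{k}\prod_{i\notin[{\bf D}]}(1+(d_i-1)u^{2})$. The key observation is that the degree matrix bookkeeping in \eqref{expansionomega} is exactly designed so that the diagonal-plus-adjacency matrix $M_G:=I-uA_G+u^{2}(D_G-I)$, when restricted to $G\setminus C$ and summed against $2^{k(C)}u^{|C|}$ over vertex-disjoint cycle unions $C$, reproduces this matching sum. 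So the first step is to establish a determinant identity: for any graph $H$,
\begin{equation}
\sum_{C\in\mathcal{T}(H)} 2^{k(C)} u^{|C|} \det\bigl( M_H\big|_{H\setminus C}\bigr)
=
\sum_{{\bf D}\in\mathcal{D}(H)}(-u^{2})^{|{\bf D}|}\prod_{i\notin[{\bf D}]}\bigl(1+(d_i^{H}-1)u^{2}\bigr), \nonumber
\end{equation}
where $d_i^H$ is the degree \emph{in $H$}. Applying this with $H=G$ and using Theorem \ref{thmmonomer} then yields \eqref{expansionomega} — provided the degrees match up, which is the subtle point discussed below.

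The determinant identity itself I would prove by the standard permutation expansion of $\det(M_H|_{H\setminus C})$: a permutation $\sigma$ of the vertex set $V(H)\setminus V_C$ decomposes into fixed points (contributing the diagonal entry $1+(d_i^H-1)u^{2}$), transpositions $\{i,j\}$ with $ij\in E(H)$ (contributing $(-u)(-u)=u^{2}$ from the two off-diagonal $-u$ entries, with an extra sign $-1$ from the transposition, giving $-u^{2}$ — matching a dimer edge), and longer cycles. Longer cycles in $\sigma$ correspond to cycles in $H$ and, together with the explicit cycle sum $\sum_C 2^{k(C)}u^{|C|}$, must telescope; this is exactly the mechanism in Godsil–Gutman's proof of \eqref{expansionmatching}, where each cycle in $H$ gives two orientations (the factor $2^{k(C)}$) and the algebra cancels so that only vertex-disjoint cycle unions survive with the clean weight. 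I would either cite \cite{GGmatching} for this combinatorial cancellation or reproduce the short argument: group terms in $\det M_H$ by the set of vertices lying on non-trivial, non-transposition cycles of $\sigma$; this vertex set spans a subgraph that is a disjoint union of cycles, each traversable in $2$ directions, and the remaining principal minor factors out.

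\textbf{Main obstacle.} The genuinely delicate step is the degree bookkeeping. In \eqref{expansionomega} the matrix $M_G$ uses $D_G$, the degrees in the \emph{full} graph $G$, but after restricting to the principal minor on $V(G)\setminus V_C$ the diagonal entry at vertex $i$ is still $1+(d_i^G-1)u^2$, whereas the matching expansion of $\omega_{G\setminus C}$-type quantities would naturally want $d_i$ computed in $G\setminus C$. These differ precisely by the edges from $i$ into $V_C$. So the identity is \emph{not} simply $\omega_G(u^2)=\sum_C 2^{k(C)}u^{|C|}\,\omega_{G\setminus C}(u^2)\cdot(\text{something})$; rather, one must check that the vertex weight $1+(d_i^G-1)u^2$ is exactly what the permutation expansion produces for a vertex $i$ that is a fixed point of $\sigma$ and hence not covered by the "dimer part" $C$. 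This works because in the matching $\bf D$ underlying the whole sum, an uncovered vertex $i$ genuinely has its full-graph degree $d_i^G$ available, and the factorization $\det(M_G|_{G\setminus C})$ keeps the $d_i^G$ untouched — the restriction only removes rows/columns, not the entries themselves. I would verify this carefully on the level of matching $\bf D$ versus pair $(C,\sigma')$ where $\sigma'$ is a permutation with only fixed points and transpositions on $V(G)\setminus V_C$: the bijection sends $\bf D$ to $(\emptyset,\sigma_{\bf D})$ plus the redistribution of "extra" cycle contributions, and one checks the weights agree monomial-by-monomial in $u$. Once the degree accounting is pinned down, the rest is the routine Godsil–Gutman cancellation, and \eqref{expansionomega} follows.
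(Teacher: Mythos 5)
Your proposal is correct and follows essentially the same route as the paper: both reduce the theorem, via Theorem \ref{thmmonomer}, to a determinant-sum identity for the monomer-dimer partition function with general weights, and then specialize $\lambda_i=1+(d_i-1)u^{2}$ and $\sqrt{-\mu_e}=u$. The only difference is that the paper simply cites Chernyak and Chertkov \cite{CCfermion2} for that identity, whereas you sketch its proof via the Godsil--Gutman permutation-expansion cancellation; your ``main obstacle'' about degree bookkeeping dissolves for exactly the reason you state, since the identity holds for arbitrary diagonal entries and the restriction merely deletes rows and columns.
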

\begin{proof}
For the proof, we use the result of Chernyak and Chertkov
 \cite{CCfermion2}.
For given weights $\bs{\mu}=(\mu_e)_{e \in E}$ and 
$\bs{\lambda}=(\lambda_i)_{i \in V}$,
a $|V| \times |V|$  matrix $H$ is defined by
\begin{equation}
H:={\rm diag}(\bs{\lambda})- \sum_{e \in E} \sqrt{- \mu_e} A_e, \nonumber
\end{equation}
where $A_{e}=E_{i,j}+E_{j,i}$ for $e=ij$ and $E_{i,j}$ is the matrix base.
In our notation, their result implies
\begin{equation}
\Xi_G(\bs{\mu},\bs{\lambda}) 
=
\sum_{C \in \mathcal{T}}
2^{k(C)}
\det
H |_{G\setminus C}
\prod_{e \in C}\sqrt{- \mu_e}. \nonumber
\end{equation}
If we
set $\lambda_i=1+(d_i - 1)u^2$ and $\sqrt{- \mu_e}=u$,
then the assertion follows.
\end{proof}

For regular graphs, Eqs.~(\ref{expansionmatching}) and
(\ref{expansionomega})
are equivalent
because of Eq.~(\ref{cormatchingeq1}).

The matrix $(I -u {A_{G}} + u^{2}(D_{G}-I))$ is well known 
for its appearance in the Ihara formula of the graph zeta function \cite{STzeta1}.
The result in \cite{WFzeta} shows that the Bethe free energy and the graph zeta function are intimately related
although mathematical relations between the result and Theorem \ref{coromega} are unknown.

\subsection{Values at $\beta=1$}
The value of $\omega_{G}(1)$ is interpreted as the number of a set
constructed from $G$.
For the following theorem, recall that $G^{(2)}$ is
obtained by adding a vertex on each edge in $G=(V,E)$.
The vertices of $G^{(2)}:=(V^{(2)},E^{(2)})$ are classified into 
$V_{O}$ and $V_{A}$,
where $V_{O}$ is the set of original vertices and 
$V_{A}$ is that of newly added ones.
The set of matchings on $G^{(2)}$ is denoted by
$\mathcal{D}_{G^{(2)}}$.
\begin{thm} 
\label{thmomega1}
\begin{equation}
\omega_{G}(1)
=
|\{
{\bf D} \in \mathcal{D}_{G^{(2)}}
;
[{\bf D}]  \supset  V^{}_{O}
\}|.                   \nonumber
\end{equation}
\end{thm}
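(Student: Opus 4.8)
The plan is to reduce to Theorem~\ref{thmmonomer} and then turn the resulting alternating sum into a genuine count by a sign-reversing involution. Setting $\beta=1$ in Theorem~\ref{thmmonomer} makes $\lambda_i=d_i$, so
\begin{equation*}
\omega_{G}(1)=\Xi_{G}(-1,(d_i)_{i\in V})=\sum_{\mathbf{D}\in\mathcal{D}}(-1)^{|\mathbf{D}|}\prod_{i\in V\setminus[\mathbf{D}]}d_i .
\end{equation*}
Since $d_i$ is the number of ends of edges at $i$, the factor $\prod_{i\in V\setminus[\mathbf{D}]}d_i$ counts the maps $g$ that assign to every vertex outside $[\mathbf{D}]$ one end of an edge (a half-edge) incident to it. Hence $\omega_G(1)=\sum_{(\mathbf{D},g)}(-1)^{|\mathbf{D}|}$, where the sum runs over all pairs consisting of a matching $\mathbf{D}$ of $G$ and such a partial assignment $g$.

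Next I would fix a total order on $E$ and, for a pair $(\mathbf{D},g)$, call a non-loop edge $e=ij$ \emph{special} if either $e\in\mathbf{D}$, or $i,j\notin[\mathbf{D}]$ and $g(i),g(j)$ are the two ends of $e$. Define $\Phi(\mathbf{D},g)=(\mathbf{D},g)$ when there is no special edge, and otherwise let $\Phi$ toggle the least special edge $e^{\ast}$: if $e^{\ast}\in\mathbf{D}$, delete it from $\mathbf{D}$ and extend $g$ by sending each endpoint of $e^{\ast}$ to its own end of $e^{\ast}$; if $e^{\ast}$ is special of the second kind, insert $e^{\ast}$ into $\mathbf{D}$ and delete both its endpoints from the domain of $g$. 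The essential point to verify is that this operation leaves the set of special edges of the pair unchanged (so that the least one is still $e^{\ast}$); this follows because a half-edge determines its edge, which prevents any edge other than $e^{\ast}$ from gaining or losing the property of being special. Consequently $\Phi$ is an involution, it changes $|\mathbf{D}|$ by $\pm1$ whenever a special edge exists, and hence it is a fixed-point-free sign-reversing involution off the set of pairs with no special edge. Therefore $\omega_G(1)=\sum_{(\mathbf{D},g):\ \text{no special edge}}(-1)^{|\mathbf{D}|}$, and a pair has no special edge exactly when $\mathbf{D}=\emptyset$ and $g$ is a total map on $V$ with $g(i)$ incident to $i$ and no edge having both of its ends in the image of $g$; all such pairs contribute $+1$.

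Finally I would match these surviving maps $g$ with the matchings counted on the right-hand side. In $G^{(2)}$ the neighbours of a subdivision vertex $m_e$, $e=ij$, are precisely $i$ and $j$ (with a double edge when $e$ is a loop or one of a set of parallel edges), so a matching $\mathbf{D}'$ of $G^{(2)}$ with $[\mathbf{D}']\supseteq V_O$ matches every $i\in V_O$ to some $m_e$ with $e\ni i$; recording for each $i$ the corresponding end of $e$ produces a map $g$ of the above kind, and the matching condition at the vertices $m_e$ is exactly the requirement that no edge has both of its ends in the image of $g$. This correspondence is clearly reversible, which gives $\omega_G(1)=|\{\mathbf{D}\in\mathcal{D}_{G^{(2)}}:[\mathbf{D}]\supseteq V_O\}|$. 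The main obstacle is the bookkeeping in the middle step, namely proving that $\Phi$ genuinely preserves the set of special edges and is therefore an involution, while also handling loops and parallel edges correctly both there and in the final bijection (where ``ends of edges'' rather than ``edges'' is the right counting unit).
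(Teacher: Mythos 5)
Your proof is correct, but it takes a genuinely different route from the paper's. The paper evaluates $\omega_G(1)$ via Theorem \ref{thmomegaaltrep}: at $\beta=1$ the factor $h_0(\beta)=1-\beta$ kills every subgraph with a tree component and $h_n=0$ for $n\ge2$ kills components of higher nullity, so $\omega_G(1)=\sum_s 2^{k(s)}$ over the spanning subgraphs all of whose components have nullity $1$; it then exhibits a $2^{k(s)}$-to-one surjection from $\{\mathbf{D}\in\mathcal{D}_{G^{(2)}}:[\mathbf{D}]\supset V_O\}$ onto such subgraphs by recording which half of each edge is covered. You instead start from the monomer--dimer formula of Theorem \ref{thmmonomer}, which at $\beta=1$ gives the alternating sum $\sum_{\mathbf{D}}(-1)^{|\mathbf{D}|}\prod_{i\notin[\mathbf{D}]}d_i$, and cancel the signs by a sign-reversing involution on pairs (matching, half-edge assignment); your surviving fixed points --- total half-edge assignments in which no edge is chosen from both of its sides --- are precisely the objects the paper counts with multiplicity $2^{k(s)}$, and your closing bijection with matchings of $G^{(2)}$ covering $V_O$ is the same correspondence as the paper's map $F$. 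So the two arguments converge at the end; what your version buys is a constructive, bijective proof of the identity $\omega_G(1)=\sum_{\mathbf{D}}(-1)^{|\mathbf{D}|}\prod_{i\notin[\mathbf{D}]}d_i$, which the paper only derives afterwards from Theorem \ref{thmomega1} by inclusion--exclusion, at the cost of the extra bookkeeping of the involution --- which you handle correctly: the observation that a half-edge determines its edge does make $\Phi$ preserve the set of special edges, and loops, which can never lie in a matching nor have both of their ends in the image of $g$, cause no trouble. One trivial slip in a parenthetical: parallel edges of $G$ do not produce double edges in $G^{(2)}$ (each gets its own subdivision vertex); only loops do. This does not affect the argument.
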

\begin{proof} 
From Theorem \ref{thmomegaaltrep}, we have
\begin{equation}
\omega_{G}(1)
=
\sum_{s \subset E, s=G_1 \cup \cdots \cup G_{k(s)} 
\atop n(G_j)=1 \text{ for } j=1\ldots k(s)}
 2^{k(s)}, \label{thmomega1eq2}
\end{equation}
where $G_j$ is a connected component of $(V,s)$.
We construct a map $F$ from 
$\{ {\bf D} \in \mathcal{D}_{G^{(2)}}; [{\bf D}] 
\supset  V^{}_{O}  \}$
to $s \subset E$
as 
\begin{equation}
F({\bf D}):= 
\{ e \in E ;
\text{ the half of }e \text{ is covered by an edge in } {\bf D}
\}. \nonumber
\end{equation}
Then, the nullity of each connected component of $F({\bf D})$ is $1$ and
 $|F^{-1}(s)|=2^{k(s)}$.
\end{proof}

\begin{ex}
For the graph $X_3$ in Figure \ref{figomega1}, 
$\omega_{X_3}(1)=\omega_{C_3}(1)=2$.
The corresponding arrangements are also shown in Figure \ref{figomega1}.
\end{ex}

\begin{figure}
\begin{center}
\includegraphics[scale=0.28]{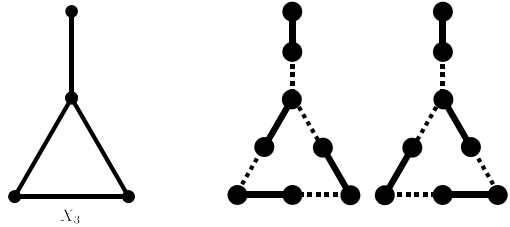}
\vspace{-1mm}
\caption{Graph $X_3$ and possible arrangements on $X^{(2)}_3$.}
\label{figomega1}
\end{center}
\end{figure}

Finally, we remark on the relations between the results on
$\omega_{G}(1)$ obtained in this paper. 
From Proposition \ref{propbasicomega},
$\omega_{G}(1)$ satisfies
\begin{equation}
\omega_{G}(1)=\omega_{G \backslash e}(1)+\omega_{G / e}(1)
\quad 
\text{ if }
e \in E \text{ is not a loop.}  \nonumber
\end{equation}
This relation can be directly observed from the interpretation
of Theorem \ref{thmomega1}.
Theorem \ref{thmmonomer} 
gives
\begin{equation}
 \omega_{G}(1)=
\sum_{{\bf D} \in \mathcal{D}} \hspace{-0.5mm} (-1)^{|{\bf D}|} \hspace{-2mm}
\prod_{i \in V \backslash [{\bf D}]} \hspace{-2mm} d_{i},  \nonumber
\end{equation}
which can be proved from Theorem \ref{thmomega1}
with the inclusion-exclusion principle.
Theorem \ref{coromega} gives
\begin{equation}
\omega_{G}(1)
 =
\sum_{C \in \mathcal{T}} 2^{k(C)}
\det{[D_G-A_G]
\Big|_{G\setminus C}
}. \nonumber
\end{equation}
We can directly prove this formula from Theorem \ref{thmomega1}
using a type of matrix-tree theorem.

\section*{Acknowledgments}
This work was supported in part by Grant-in-Aid for JSPS Fellows
20-993 and Grant-in-Aid for Scientific Research (C) 19500249.

%Bib tex
\bibliographystyle{plain}
\bibliography{BP}

\end{document}